\newtheorem{theorem}{Theorem}[section]
\newtheorem{corollary}[theorem]{Corollary}
\newtheorem{remark}[theorem]{Remark}
\newtheorem{proposition}[theorem]{Proposition}
\newtheorem{ex}[theorem]{Example}
\newenvironment{example}{\begin{ex} \em }{\em \end{ex}}
\newtheorem{definition}[theorem]{Definition}
\newcommand{\norm}  [1]{\ensuremath{\left  \|       #1  \right \|       }}
\newcommand{\sqb}   [1]{\ensuremath{\left [        #1  \right ]        }}
\newcommand{\cb}    [1]{\ensuremath{\left  \{      #1  \right \}       }}
\newcommand{\cbg}   [1]{\ensuremath{\bigl  \{      #1  \bigr  \}       }}
\newcommand{\cbgg}  [1]{\ensuremath{\biggl \{      #1  \biggr \}       }}
\newcommand{\of}    [1]{\ensuremath{\left (        #1  \right )        }}
\newcommand{\ofg}   [1]{\ensuremath{\bigl (        #1  \bigr  )        }}
\newcommand{\abs  } [1]{\ensuremath{\left  |       #1  \right |        }}
\newcommand{\st} {\ensuremath{|\;}}
\newcommand{\dual} {{\rm dual \,}}
\newcommand{\cl}  {{\rm cl  \,}}
\newcommand{\bd}  {{\rm bd \,}}
\newcommand{\Int} {{\rm int \,}}
\newcommand{\conv}  {{\rm conv \,}}
\newcommand{\cone}{{\rm cone\,}}
\newcommand{\gr}  {{\rm gr\,}}
\DeclareMathOperator*{\Max}{ Max}
\DeclareMathOperator*{\Min}{ Min}
\DeclareMathOperator*{\wMin}{wMin}
\newcommand{\T}{\mathcal{T}}
\newcommand{\D}{\mathcal{D}}
\renewcommand{\P}{\mathcal{P}}
\newcommand{\R}{\mathbb{R}}
\newcommand{\N}{\mathbb{N}}
\renewcommand{\subset}{\subseteq}
\renewcommand{\supset}{\supseteq}
\newcommand{\smz}{\setminus\{0\}}
\newcommand{\One}{\mathrm{1\negthickspace I}}
\newcommand{\diag}{{\rm diag}}
\newcommand{\bs}{\backslash}
\author{Andreas H. Hamel \and Andreas L\"{o}hne \and Birgit Rudloff}
\title{Benson type algorithms for linear vector optimization and applications}
\date{February 11, 2013 (update: July 28, 2013)}
\begin{document}
\maketitle

\begin{abstract} 
New versions and extensions of Benson's outer approximation algorithm for solving linear vector optimization problems are presented. Primal and dual variants are provided in which only one scalar linear program has to be solved in each iteration rather than two or three as in previous versions. Extensions are given to problems with arbitrary pointed solid polyhedral ordering cones. Numerical examples are provided, one of them involving a new set-valued risk measure for multivariate positions.

\medskip

\noindent
{\bf Keywords:} Vector optimization, multiple objective optimization, linear programming,
duality, algorithms, outer approximation, set-valued risk measure, transaction costs.

\medskip

\noindent
{\bf MSC 2010 Classification:} 90C29, 90C05, 90-08, 91G99.

\end{abstract}

\section{Introduction}
Set-valued approaches to vector optimization are promising in theory and applications. A duality theory in this framework is important for algorithms, and the dual problems can be interpreted in certain applications, see e.g. \cite{EhrLoeSha12,Hamel09,HamHey10,HamHeyLoeTamWin04,HamHeyRud11,HeyLoe08,HeyLoeTam09,HeyLoeTam09-1, Loehne11book,ShaEhr08-1}. Benson's outer approximation algorithm is a fundamental tool for solving linear (and also convex) vector optimization problems \cite{Benson98a,Benson98,EhrLoeSha12,ShaEhr08,ShaEhr08-1}. It is also important for solving set-valued problems \cite{LoeSch12}. Recent applications of linear vector optimization concern financial markets with frictions (transaction costs). For such applications, one obtains optimization problems which are genuinely set-valued. The need to compute the values of a set-valued risk measure for multi-variate random variables was a driving force for this work. 

In this article, we introduce a primal and a dual algorithm of Benson type where only one LP has to be solved in each iteration step\footnote{A similar variant has been developed independently in \cite{Csirmaz13}.}. In contrast, previous versions \cite{Benson98a,Benson98,EhrLoeSha12,ShaEhr08,ShaEhr08-1} require at least two different LPs in each step. As the main effort of Benson type algorithms in typical applications is caused by the LPs, the computational time can be reduced considerably by the new algorithms.
Another advantage is that all LPs have a very similar structure and therefore the impact of warm starts can be improved. A further benefit is an improvement of the error estimation given in \cite{ShaEhr08,ShaEhr08-1}, i.e., in approximate variants of the algorithms: The same approximation error can be achieved with fewer iteration steps (compare Remark \ref{rem_eps} and Example \ref{ex07} below). For both the primal and dual algorithm two variants (`break' and `no break') are presented and compared (compare Example \ref{ex07}). Another novelty of this article is that linear vector optimization problems with arbitrary polyhedral solid pointed ordering cones are treated, whereas in all other references \cite{Benson98a,Benson98,EhrLoeSha12,EhrShaSch11,Loehne11book,ShaEhr08,ShaEhr08-1} only the special case of the usual ordering cone $\R^q_+$ is considered. This feature will be exploited in applications involving set-valued risk measures for multi-variate random variables in markets with transaction costs. In such situations, ordering cones are usually different from $\R^q_+$ and generated by a large number of directions. A short introduction into this topic and several (numerical) examples are given. Examples \ref{ex_WCtheory} and \ref{ex45} involve a new type of a set-valued risk measure which we baptized the `relaxed' worst case risk measure.

This article is organized as follows. In Section 2 we provide some basic notations and results. The next three sections start with short introductions. Section 3 contains an overview on the set-valued approach to linear vector optimization and related duality results where, in contrast to most of the literature, we allow ordering cones more general than $\R^q_+$. In Sections 4 we introduce the new variants of Benson's algorithm. We also give a detailed description of the two-phase-method to treat unbounded problems. Section 5 provides an introduction to applications involving set-valued risk measures, and in Section 6 several numerical examples are reported.

\section{Preliminaries}

Let $A\subset \R^q$. We denote by $\cl A$, $\Int A$, $\bd A$ the closure, interior and boundary of $A$, respectively. The set $A$ is called {\em solid} if its interior is non-empty. A {\em convex polyhedron} or {a polyhedral convex set} $A$ in $\R^q$ is defined to be the intersection of finitely many half spaces, that is
\[
A=\bigcap_{i=1}^r \cb{y \in \R^q \st (z^i)^T y \geq \gamma_i} 
\]
where $z^1, \ldots, z^r \in \R^q\bs\cb{0}$ and $\gamma_1, \ldots, \gamma_r \in \R$.
As polyhedra considered in this article are always convex, we will not mention convexity explicitly. Every non-empty polyhedron $A \subseteq \R^q$ can be expressed by means of a finite number of points $y^1, \ldots, y^s \in \R^q$ ($s\in \N\smz$) and directions $k^1, \ldots, k^t \in \R^q\setminus \cb{0}$ ($t \in \N$) through
\[ 
A = \cbgg{ \sum_{i=1}^s \lambda_i y^i + \sum_{j=1}^t \mu_j k^j \bigg|\; \lambda_i \geq 0,\; i=1,\dots,s,\; \sum_{i=1}^s \lambda_i = 1,\; \mu_j \geq 0,\;j=1,\dots,t},
\]
where $k \in \R^q\smz$ is called a direction of $A$ if $A + \cb{\mu\cdot k} \subseteq A$ for all $\mu>0$.
This can be also written as
\begin{equation}\label{eq1}
 A = \conv \cb{y^1, \ldots,y^s} + \cone\cb{k^1, \ldots, k^t}.
\end{equation}
Note that we set $\cone \emptyset = \cb{0}$. The polyhedron $A$ is bounded if and only if the cone-part in the above formula is $\cb{0}$. The vectors \cb{y^1, \ldots, y^s} and directions \cb{k^1, \ldots, k^t} are called the {\em generators} of $A$. The set $A_\infty := \cone\cb{k^1, \ldots, k^t}$ is the recession cone of $A$. A finite set of half spaces defining a polyhedron $A$ is called H-representation (or inequality representation) of $A$, whereas a finite set of points and directions defining $A$ is called V-representation (or generator representation) of $A$. A bounded polyhedron is called a {\em polytope}. A convex subset $F$ of a convex set $A$ is called a {\em face} of $A$ if
$\of{\bar y,\hat y\in A \,\wedge\, \lambda\in (0,1)\,\wedge\,\lambda \bar y + (1-\lambda) \hat y\in F}$ implies $y,\hat y\in F$. A set $F$ is a {\em proper} (i.e. $\emptyset\neq F\neq A$) face of a polyhedron A if and only if there is a supporting hyperplane $H$ to $A$ with $F=H\cap A$. The proper $(r-1)$-dimensional faces of an $r$-dimensional polyhedral set $A$ are called {\em facets} of $A$. A point $y\in A$ is called a {\em vertex} of $A$ if $\cb{y}$ is a face of $A$. If $k \in \R^q\setminus \cb{0}$ belongs to a half-line face of a polyhedral set $A$, then $k$ is called {\em extreme direction} of $A$.

A polyhedral convex cone $C \subset \R^q$ is called {\em pointed} if it contains no lines. Of course, a solid and pointed convex cone is {\em non-trivial}, that is, $\cb{0} \subsetneq C \subsetneq \R^q$. 
A non-trivial convex pointed cone $C \subset\R^q$ defines a partial ordering $\leq_C$ on $\R^q$ by $y^1 \leq_C y^2$ if and only if $y^2-y^1 \in C$. If $C=\R^q_+ :=\cb{y\in \R^q \st y_1 \geq 0, \ldots , y_q \geq 0}$, then the component-wise ordering $\leq_{\R^q_+}$ is abbreviated to $\leq$. A point $y \in \R^q$ is called {\em $C$-minimal in $A$} if $y \in A$ and $(\cb{y} -C\smz) \cap A = \emptyset$. The set of $C$-minimal elements of $A$ is denoted by $\Min_C A$. If $C$ is additionally solid (but not necessarily pointed), a point $y \in \R^q$ is called {\em weakly $C$-minimal in $A$} if $y \in A$ and $(\cb{y} - \Int C) \cap A = \emptyset$. Likewise, by replacing $C$ by $-C$, {\em $C$-maximal} and {\em weakly $C$-maximal} points in a set $A \subseteq \R^q$ are introduced and we write $\Max_C A$ for the set of $C$-maximal elements of $A$. The dual cone of a cone $C \subseteq \R^q$ is the set $C^+:=\cb{y^* \in \R^q \st \forall y \in C: (y^*)^T y \geq 0}$. The $i$-th canonical unit vector in $\R^q$ is denoted by $e^i$.

\section{Linear vector optimization}

In this section we outline the set-valued approach to linear vector optimization including duality theory and establish a more general setting where arbitrary polyhedral ordering cones $C\subseteq\R^q$ rather than $C=\R^q_+$ are supposed. A comprehensive 
exposition for the case of the ordering cone $C=\R^q_+$ can be found in \cite{Loehne11book}. The origin of this approach is discussed in \cite[Section 4.8]{Loehne11book}. A related duality theory and an overview on other approaches to duality for linear vector optimization problems can be found in a recent paper by Luc \cite{Luc11}.

\subsection{Problem setting and solution concepts}

The solution concepts defined in this section are based on the idea that in vector optimization (in contrast to scalar optimization), minimality and infimum attainment are no longer equivalent concepts. In order to have an appropriate complete lattice where an infimum is defined and exists, a set-valued reformulation of the vector optimization problem is necessary. Here we just introduce the solution concepts that result from these ideas. We motivate these concepts from an application oriented viewpoint only. More details and a theoretical motivation can be found in \cite{HamLoe12,HeyLoe11, Loehne11book}. 

Let matrices $B \in \R^{m\times n}$, $P \in \R^{q \times n}$, a vector $b \in \R^m$ and a  solid pointed polyhedral cone $C \subseteq \R^q$ be given. The following linear vector optimization problem is considered:

\begin{equation}\tag{P}\label{p}
 \text{ minimize } P: \R^n\to\R^q \text{ with respect to } \le_C \text{ subject to } Bx \geq b.
\end{equation}
Define
\[
S = \cb{x \in \R^n \mid Bx \geq b} \quad \text{and} \quad 
	S^h = \cb{x \in \R^n \mid Bx \geq 0}.
\]
Of course, we have $S^h = S_\infty$, that is, the non-zero points in $S^h$ are exactly the directions of $S$.
A point $\bar x \in S$ is said to be a {\em minimizer} for \eqref{p} if there is no $x \in S$ with $P x \leq_C P \bar x$ and $P x \neq P\bar x$, that is, $P \bar x$ is $C$-minimal in $P[S]:=\cb{Px\st x \in S}$. The set of minimizers of \eqref{p} is denoted by $\Min\eqref{p}$.

A direction $k \in \R^n\smz$ of $S$ is called a {\em minimizer} for \eqref{p} if the corresponding point $k \in S^h\smz$ is a minimizer of the homogeneous problem
\begin{equation}\tag{P$^h$}\label{ph}
 \text{ minimize } P: \R^n\to\R^q \text{ with respect to } \le_C \text{ subject to } Bx \geq 0.
\end{equation}

\begin{definition}
A set $\bar S \subseteq S$ is called a set of {\em feasible} points for \eqref{p} and, whenever $S \neq \emptyset$, a set $\bar S^h \subseteq S^h\smz$ is called a set of feasible directions for \eqref{p}. 

A pair of sets $\of{\bar S, \bar S^h}$ is called a {\em finite infimizer} for \eqref{p} if $\bar S$ is a non-empty finite set of feasible points for \eqref{p}, $\bar S^h$ is a (not necessarily non-empty) finite set of feasible directions for \eqref{p}, and
\begin{equation}\label{eq_inf_att}
\conv P[\bar S]  + \cone P[\bar S^h] +C = P[S] + C.
\end{equation}
\end{definition}

An infimizer can be understood as a feasible element for a set-valued extension of problem \eqref{p} (lattice extension) where the infimum (which is well defined for this lattice extension) is attained, i.e., condition \eqref{eq_inf_att} stands for the {\em infimum attainment}. 

The set $\P:=P[S]+C$ is called {\em upper image} of \eqref{p}. Clearly, if $(\bar S, \bar S^h)$ is a finite infimizer and $(\cb{0},Y)$ is a V-representation of the cone $C$, then $(P[\bar S],P[\bar S^h] \cup Y)$ is a V-representation of the upper image $\P$.

The following solution concept is based on a combination of minimality and infimum attainment.

\begin{definition}\label{def_sol}
A finite infimizer $(\bar S, \bar S^h)$ of \eqref{p} is called a {\em solution} to \eqref{p} if $\bar S$ and $\bar S^h$ consist of only minimizers.
\end{definition}

In practice, the upper image $\P$ is one of the most important information for a decision maker. This is due to the fact that in typical applications the dimension $n$ of the decision space is considerably larger than the dimension $q$ of the outcome (or criteria) space. The problem to calculate all the minimizers is usually not tractable. Moreover, the overwhelming set of all minimizers is in general not suitable to support a decision. It is more natural and easier in practice to compare the criteria rather than decisions. Furthermore, also in scalar programming it is often not necessary to know all optimal solution. A solution as introduced above can be seen as an outcome set based concept which provides the information to describe the upper image $\P$.

\subsection{Duality}

If vector optimization is considered in a set-valued framework, it is very natural to consider a dual problem with a hyperplane-valued objective function. First, we note that this is also the case in scalar optimization as in $\R$ a point and a hyperplane are the same thing. Secondly, we have in mind the well-known dual description of a convex set by hyperplanes. The upper image $\P$ of a linear vector optimization problem \eqref{p} is a convex polyhedron which can be interpreted as an infimum of the lattice extension of problem \eqref{p}, see \cite{Loehne11book}. It is therefore natural to ask for a dual description of this convex set which is obtained as the supremum of a suitable dual problem. As a third argument, we want to mention that there is a lack of applications of the classical approaches to duality theory in vector optimization. For instance, Ehrgott \cite{Ehrgott_NC} pointed out that ``dual algorithms could not be developed because of the absence of a duality theory for MOLP that could be algorithmically exploited.''

The idea of {\em geometric duality} \cite{HeyLoe08} is to transform the hyperplane-valued dual problem into a vector optimization problem. This idea is taken from the theory of convex polytopes, where an H-representation of a polytope $A$ defines a V-representation of a dual polytope. For instance, if $A$ is solid and contains zero in its interior, an H-representation of the form
\[ A=\cb{x\in \R^q \st B x \leq (1,\dots,1)^T } \]
exists. The row vectors of the matrix $B$ yield a V-representation of the polar set $A^\circ:=\{y^*\in\R^q \st \forall y \in A: {y^*}^T y \leq 1 \}$ of $A$, which is a dual polytope to the polytope $A$. The duality
relation between $A$ and the dual polytope $A^\circ$ is given by an inclusion reversing one-to-one map $\Psi$ between the set of all faces of $A$ and the set of all faces of $A^\circ$.

A similar duality map can be used to transform a hyperplane-valued optimization problem into a vector-valued problem, which is called the {\em geometric dual problem}. We assume throughout that there exists a vector
\begin{equation}\label{ass_c}
 c \in \Int C \quad \text{ such that } \quad c_q=1,
\end{equation}
and we fix such a vector $c$. As $C$ was assumed to be a solid cone, there always exists some $c \in \Int C$ such that either $c_q=1$ or $c_q =-1$. In the latter case we can consider the problem where $C$ and $P$ are replaced by $-C$ and $-P$, which is equivalent to (P) and fulfills \eqref{ass_c}.

Consider the dual problem
\begin{equation*}\label{d}
\tag{D$^*$} \text{ maximize } D^*: \R^m \times \R^q \to \R^q \text{ with respect to } \leq_K \text{ over } T,
\end{equation*}
with (linear) objective function
\[ D^*:\R^m\times\R^q \to \R^q,\quad D^*(u,w):=\of{w_1,...,w_{q-1},b^T u}^T,\]
ordering cone
\[ K:=\R_+ \cdot (0,0,\dots,0,1)^T,\]
and feasible set
\[ T:=\cb{(u,w)\in \R^m \times \R^q \st u \geq 0,\; B^T u = P^T w,\; c^T w = 1,\; Y^T w \geq 0},\]
where $Y$ is the matrix whose columns are generators of the ordering cone $C$.

A point $(\bar u, \bar w) \in T$ is said to be a {\em maximizer} for \eqref{d} if there is no $(u,w) \in T$ with $D^*(u,w) \geq_K D^*(\bar u,\bar w)$ and $D^*(u,w) \neq D^*(\bar u,\bar w)$. The set of maximizers of \eqref{d} is denoted by $\Max\eqref{d}$.

\begin{definition}[\cite{Hamel11,HamLoe12,HeyLoe11,Loehne11book}]
A non-empty finite set $\bar T$ of points being feasible for \eqref{d} is called a {\em finite supremizer} of \eqref{d} if
\begin{equation}\label{eq_sup_att}
 \conv D^*[\bar T] - K = D^*[T] -K.
\end{equation}
\end{definition}

The set $\D^*= D^*[T]-K$ is called {\em lower image} of \eqref{d}. Similarly to above, a finite supremizer yields a V-representation of $\D^*$. Condition \eqref{eq_sup_att} can be interpreted as the attainment of the supremum in a suitable complete lattice, see e.g. \cite{Loehne11book}.
The combination of maximality and supremum attainment leads to a solution.

\begin{definition}\label{def_sold}
A finite supremizer $\bar T$ of \eqref{d} is called a {\em solution} to \eqref{d} if it consists of only maximizers.
\end{definition}
Note that, in contrast to \eqref{eq_inf_att}, we do not need directions in \eqref{eq_sup_att}. This is due to the simplicity of the cone $K$ in contrast to $C$. 

A duality mapping $\Psi$ for the vector optimization problems \eqref{p} and \eqref{d} is now introduced. The bi-affine function
\[ \varphi:\; \R^q\times\R^q \to \R,\quad \varphi(y,y^*):=\sum_{i=1}^{q-1} y_i y^*_i + y_q \of{1-\sum_{i=1}^{q-1} c_i y^*_i} - y^*_q\]
is used to define the following two injective hyperplane-valued maps
\[ H:  \R^q \rightrightarrows \R^q,\quad  H(y^*):= \cb{y \in \R^q \st   \varphi(y,y^*)=0},\]
\[ H^*:\R^q \rightrightarrows \R^q,\quad  H^*(y):= \cb{y^* \in \R^q \st \varphi(y,y^*)=0}.\]
The mapping $H$ yields the duality map
\[\Psi: 2^{\R^q} \to 2^{\R^q}, \quad\Psi( F^*):=\bigcap_{y^* \in  F^*}  H(y^*) \cap \P.\]
By setting
\begin{equation}\label{eq_w}
w(y^*):= \of{y^*_1,\dots,y^*_{q-1},1-\sum_{i=1}^{q-1} c_i y^*_i}
\end{equation}
and
\[ w^*(y):=\of{y_1-y_q c_1\,,\dots,\,y_{q-1}-y_q c_{q-1}\,,\,-1} ,\]
we can write
\begin{equation}\label{eq_phiw}
\varphi(y,y^*)= w(y^*)^T y -y^*_q = w^*(y)^T y^* + y_q,
\end{equation}
which is useful for the geometric interpretation of duality.

Weak duality reads as follows.
\begin{theorem}[\cite{HeyLoe08,Loehne11book}]\label{th_wgd}
The following implication is true:
\[ \sqb{y \in \P \wedge y^* \in \D^*} \implies \varphi(y,y^*)\geq 0.\]
\end{theorem}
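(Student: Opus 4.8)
The plan is to prove the implication by unravelling the definitions of $\P$, $\D^*$, and $\varphi$ and exploiting the linear-programming structure of the feasible sets $S$ and $T$. First I would take an arbitrary $y \in \P = P[S] + C$ and an arbitrary $y^* \in \D^* = D^*[T] - K$. By definition there exist $x \in S$ (so $Bx \geq b$), $d \in C$, a pair $(u,w) \in T$ (so $u \geq 0$, $B^T u = P^T w$, $c^T w = 1$, $Y^T w \geq 0$), and a scalar $\rho \geq 0$ such that $y = Px + d$ and $y^* = D^*(u,w) - \rho (0,\dots,0,1)^T$, i.e. $y^*_i = w_i$ for $i = 1,\dots,q-1$ and $y^*_q = b^T u - \rho$.

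The key computational step is to evaluate $\varphi(y,y^*)$ using the representation \eqref{eq_phiw}, namely $\varphi(y,y^*) = w(y^*)^T y - y^*_q$. Here I would first observe that the vector $w(y^*)$ defined in \eqref{eq_w} equals $w$ itself: its first $q-1$ components are $y^*_i = w_i$, and its last component is $1 - \sum_{i=1}^{q-1} c_i y^*_i = 1 - \sum_{i=1}^{q-1} c_i w_i$, which equals $w_q$ precisely because the feasibility constraint $c^T w = 1$ together with $c_q = 1$ (assumption \eqref{ass_c}) gives $w_q = 1 - \sum_{i=1}^{q-1} c_i w_i$. So $w(y^*) = w$. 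Then
\[
\varphi(y,y^*) = w^T y - y^*_q = w^T(Px + d) - (b^T u - \rho) = (P^T w)^T x + w^T d - b^T u + \rho.
\]
Substituting $P^T w = B^T u$ yields $(B^T u)^T x = u^T(Bx) \geq u^T b = b^T u$, using $u \geq 0$ and $Bx \geq b$. Next, $w^T d \geq 0$: since $d \in C$ and the columns of $Y$ generate $C$, we have $d = Y\lambda$ for some $\lambda \geq 0$, so $w^T d = (Y^T w)^T \lambda \geq 0$ by the constraint $Y^T w \geq 0$. Finally $\rho \geq 0$. Adding up, $\varphi(y,y^*) \geq b^T u - b^T u + 0 + 0 = 0$, which is the claim.

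I do not expect a genuine obstacle here; the argument is a routine chain of inequalities once the definitions are laid out. The one point requiring care is the identity $w(y^*) = w$, i.e. making sure the definition of the bi-affine form $\varphi$ is compatible with the feasibility constraint $c^T w = 1$ and the normalization $c_q = 1$; this is what makes the last coordinate bookkeeping work out. A second mild subtlety is handling the recession directions implicitly: $d \in C$ ranges over the whole cone (not just generators), so one must invoke the V-representation $C = \cone\{$columns of $Y\}$ to reduce $w^T d \geq 0$ to the finitely many inequalities $Y^T w \geq 0$. Everything else — nonnegativity of $u$, $\rho$, and the primal feasibility $Bx \geq b$ — enters transparently.
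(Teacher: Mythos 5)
Your argument is correct: the identity $w(y^*)=w$ via $c^Tw=1$ and $c_q=1$, the reduction of $w^Td\geq 0$ to $Y^Tw\geq 0$ through the V-representation of $C$, and the inequality $u^TBx\geq u^Tb$ together give exactly the weak LP duality between \eqref{p1} and \eqref{d1} that the paper (which defers the proof to its references, cf.\ Remark \ref{rem_3.7}) indicates as the route to the general case. So this is essentially the intended proof, just written out explicitly.
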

Note that weak duality implies the inclusions
\[\D^* \subseteq \cb{y^* \in \R^q \st \forall y \in \P: \; \varphi(y,y^*)\geq 0} \quad
\text{and}
\quad \P \subseteq \cb{y \in \R^q \st \forall y^* \in \D^*: \; \varphi(y,y^*)\geq 0}, \]
whereas the following strong duality theorem yields even equality.

\begin{theorem}[\cite{HeyLoe08,Loehne11book}]\label{th_fsd}
Let the feasible sets $S$ of \eqref{p} and $T$ of \eqref{d} be non-empty. Then
\begin{align*}
	\sqb{\forall y^* \in \D^*: \varphi(y,y^*)\geq 0} &\implies y   \in \P    ,\\
    \sqb{\forall y \in \P:     \varphi(y,y^*)\geq 0} &\implies y^* \in \D^*.
\end{align*}
\end{theorem}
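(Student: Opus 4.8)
The plan is to prove the strong duality theorem (Theorem~\ref{th_fsd}) by reducing the geometric-dual statements to known facts about polar-type duality between the polyhedra $\P$ and $\D^*$, and then transporting those facts through the bi-affine pairing $\varphi$. Concretely, by Theorem~\ref{th_wgd} we already have the two inclusions noted after it, so the task is to establish the reverse inclusions; that is, if $\varphi(y,y^*)\ge 0$ for all $y^*\in\D^*$ then $y\in\P$, and symmetrically. I would first fix the geometric picture: using \eqref{eq_phiw}, the condition ``$\varphi(y,y^*)\ge 0$ for all $y^*\in\D^*$'' says that the affine functional $y^*\mapsto w^*(y)^Ty^*+y_q$ is nonnegative on $\D^*$, i.e. the hyperplane $H^*(y)$ (together with the sign information coming from $w^*(y)_q=-1$) is a supporting/separating hyperplane pattern for $\D^*$. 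Thus I want to show: every closed half-space containing $\D^*$ that ``comes from a point $y$'' via $\varphi$ must actually have $y\in\P$; equivalently, $\P$ is exactly the set of $y$ whose associated $\varphi$-half-space contains $\D^*$.

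The key steps, in order, are: (1) Record that $S\neq\emptyset$ and $T\neq\emptyset$ force $\P$ and $\D^*$ to be nonempty polyhedra (the first because $P[S]+C$ is a sum of a polyhedron and a polyhedral cone; the second similarly), and that under assumption~\eqref{ass_c} the vector $c$ pairs strictly positively with the relevant directions so that the normalization $c^Tw=1$ in $T$ is feasible and the maps $w(\cdot)$, $w^*(\cdot)$, $H$, $H^*$ are well defined and affine. (2) Show the inclusion $\{y : \forall y^*\in\D^* ,\ \varphi(y,y^*)\ge 0\}\subseteq\P$ by contraposition: if $y\notin\P$, then since $\P$ is a closed convex polyhedron with recession cone containing $C$, there is a separating hyperplane, and because $\P+C=\P$ this separating functional can be chosen from $C^+$; normalizing it appropriately (this is exactly where $c$ and the constraint $c^Tw=1$ enter) produces a point $y^*$ with $w(y^*)$ equal to that functional, one checks $y^*\in\D^*$ using the LP-duality characterization of $T$ (feasibility of some $u\ge0$ with $B^Tu=P^Tw(y^*)$ is exactly the statement that $w(y^*)^TP$ is bounded below on $S$, which holds because $w(y^*)\in C^+$ and it supports $\P$), and then $\varphi(y,y^*)=w(y^*)^Ty-y^*_q<0$ by the strict separation. (3) Prove the symmetric statement $\{y^* : \forall y\in\P,\ \varphi(y,y^*)\ge 0\}\subseteq\D^*$ the same way, separating the polyhedron $\D^*$ (whose recession cone is $-K$) from a point $y^*$ outside it by a functional that, after using $\varphi(y,y^*)=w^*(y)^Ty^*+y_q$ and the fact that such functionals have last coordinate $-1$, corresponds to a genuine point $y\in\R^q$; one then verifies $y\in\P$ via strong LP duality applied to the scalar program $\min\{w^T Px : Bx\ge b\}$ whose optimal value realizes a supporting point of $\P$.

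The main obstacle is step (2)--(3), specifically the bookkeeping that turns a separating functional into a \emph{feasible} dual point, i.e. checking membership in $T$ (resp. that the separating point lies in $\P$). This is really an application of the strong duality theorem of scalar linear programming: boundedness of $w^TPx$ over $S$ (guaranteed because $w\in C^+$ and $\P$ lies on one side of the supporting hyperplane) yields, by LP duality, a $u\ge0$ with $B^Tu=P^Tw$ and $b^Tu$ equal to the optimal value, and this $b^Tu$ is precisely the $y^*_q$-type coordinate that makes $\varphi$ vanish on the supporting face — so the separation is recovered with the correct sign. The delicate points are handling the recession/unboundedness directions correctly (using $S\neq\emptyset$ so that $S^h=S_\infty$ controls them, and the constraint $Y^Tw\ge0$ which is exactly $w\in C^+$), and making sure the normalizations $c^Tw=1$ and last-coordinate$\,=-1$ are compatible with \eqref{ass_c}; everything else is the routine translation between the $\varphi$-language of \eqref{eq_phiw} and ordinary support functions of polyhedra. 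I would expect the write-up to cite \cite{HeyLoe08,Loehne11book} for the base case $C=\R^q_+$ and then indicate only the modifications needed to accommodate a general polyhedral $C$ via its generator matrix $Y$.
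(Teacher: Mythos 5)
The paper does not actually prove Theorem \ref{th_fsd}; it cites \cite{HeyLoe08,Loehne11book} for the case $C=\R^q_+$, $c=(1,\dots,1)^T$ and, in Remark \ref{rem_3.7}, asserts that the general case follows by the same argument using the generalized scalarizations \eqref{p1}, \eqref{d1}, \eqref{p2}, \eqref{d2}. Your outline for the first implication is precisely that argument: if $y\notin\P$, a separating functional is automatically in $(\P_\infty)^+\subseteq C^+$ because $\P+C=\P$, it can be normalized to $c^Tw=1$ since $c\in\Int C$ forces $w^Tc>0$, and scalar LP duality between \eqref{p1} and \eqref{d1} (boundedness of $w^TPx$ on $S\neq\emptyset$ gives an optimal $u\geq 0$ with $B^Tu=P^Tw$, $Y^Tw\geq 0$ holding because $w\in C^+$) produces $(u,w)\in T$ with $w(D^*(u,w))=w$ and $\varphi(y,D^*(u,w))=w^Ty-b^Tu<0$. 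This is correct and is the approach the paper defers to.

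Your step (3) is the one place where the sketch as stated would not quite go through. Separating $y^*\notin\D^*$ from $\D^*$ need not yield a functional of the form $w^*(y)$: since $\D^*_\infty\supseteq -K$ the separating normal $\xi$ only satisfies $\xi_q\leq 0$, and $\xi_q=0$ genuinely occurs (e.g.\ when $y^*$ lies below $\D^*$ in the $-e^q$ direction but outside it in the first $q-1$ coordinates), in which case no normalization to last coordinate $-1$ is possible; moreover, \eqref{p1}-duality certifies supporting hyperplanes of $\P$, not membership of a given point in $\P$. The standard fix is to argue the second implication directly rather than by separation: the hypothesis $\forall y\in\P:\varphi(y,y^*)\geq 0$ reads $w(y^*)^Ty\geq y^*_q$ on $\P$, which forces $w(y^*)\in(\P_\infty)^+\subseteq C^+$, while $c^Tw(y^*)=1$ holds automatically from \eqref{eq_w} and $c_q=1$; then \eqref{p1} with $w=w(y^*)$ is feasible and bounded below by $y^*_q$, so \eqref{d1} has an optimal $u$ with $b^Tu=\min_{x\in S}w(y^*)^TPx\geq y^*_q$, whence $y^*=D^*(u,w(y^*))-(b^Tu-y^*_q)e^q\in D^*[T]-K=\D^*$. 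Equivalently, your own fallback of case-splitting on whether $w(y^*)\in C^+$ (if not, move along a direction $k\in C$ with $w(y^*)^Tk<0$ inside $\P$ to make $\varphi$ negative; if so, use the LP value) closes the gap. With that repair the proposal is a faithful reconstruction of the proof in the cited references.
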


\begin{remark}\label{rem_3.7}
Theorems \ref{th_wgd} and \ref{th_fsd} are formulated and proved in the mentioned references only for the special case $C=\R^q_+$ and $c=(1,\dots,1)^T$. However, using the generalized variants of scalarized problems \eqref{p1}, \eqref{d1}, \eqref{p2} and \eqref{d2} as stated below, the general case can be obtained in the same way. 
\end{remark}

The following {\em geometric duality theorem} provides a third type of duality relation. It takes into account the facial structure of the sets $\P$ and $\D^*$. Note that geometric duality does not play any role in scalar optimization because the structure of polyhedra in the objective space $\R$ is very simple. 

\begin{theorem}[\cite{Heyde11,HeyLoe08}]\label{th_mr}
$\Psi$ is an inclusion reversing one-to-one map between the set of all $K$-maximal proper faces of $\D^*$ and the set of all proper faces of $\P$. The inverse map is given by
\begin{equation*}
 \Psi^{-1}(F)=\bigcap_{y \in  F}  H^*(y) \cap \D^*.
\end{equation*}
Moreover, if $F^*$ is a $K$-maximal proper face of $\D^*$, then
\[ \dim  F^* + \dim \Psi( F^*) = q-1.\]
\end{theorem}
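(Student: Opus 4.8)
The plan is to deduce the geometric duality theorem from strong duality (Theorem~\ref{th_fsd}) by identifying the proper faces on each side with intersections of supporting hyperplanes and then transporting them across the bi-affine coupling $\varphi$. Both $\P$ and $\D^*$ are polyhedral, so by the face characterization in Section~2 every proper face is the intersection of the set with a supporting hyperplane. From Theorems~\ref{th_wgd} and~\ref{th_fsd} one has the ``polar'' relations $\P=\bigcap_{y^*\in\D^*}\cb{y\st\varphi(y,y^*)\geq 0}$ and $\D^*=\bigcap_{y\in\P}\cb{y^*\st\varphi(y,y^*)\geq 0}$. Using \eqref{eq_phiw}, the bounding hyperplane of $\cb{y\st\varphi(y,y^*)\geq 0}$ is $H(y^*)=\cb{y\st w(y^*)^Ty=y^*_q}$ and, dually, $H^*(y)=\cb{y^*\st w^*(y)^Ty^*=-y_q}$. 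The crucial structural remark is that the last component of the normal $w^*(y)$ is always $-1$; hence, by weak duality, $H^*(y)$ supports $\D^*$ ``from above'', so that every point of $H^*(y)\cap\D^*$ is $K$-maximal in $\D^*$ (one cannot add a positive multiple of $e^q$ and remain in $\D^*$). Symmetrically, $H(y^*)$ supports $\P$ for $y^*\in\D^*$.

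Next I would set up the correspondence. Given a proper face $F$ of $\P$, one checks directly from the definitions that $\Psi^{-1}(F)=\bigcap_{y\in F}H^*(y)\cap\D^*=\cb{y^*\in\D^*\st F\subseteq H(y^*)}$; this is an intersection of faces of $\D^*$, hence a face of $\D^*$, and it is $K$-maximal by the ``support from above'' remark. The two facts that genuinely need strong duality are (i) \emph{non-emptiness}: the supporting hyperplane realizing $F$ can be written as $H(\hat y^*)$ for some $\hat y^*$, and Theorem~\ref{th_fsd} is exactly what forces $\hat y^*\in\D^*$; here the constraints $c\in\Int C$, $c_q=1$, $Y^Tw\geq 0$ defining $T$ guarantee that the relevant normals are realized by feasible dual points; and (ii) \emph{properness}, $\emptyset\neq\Psi^{-1}(F)\neq\D^*$, which uses $F\neq\P$ and that $\P$ is solid. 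The reverse assignment is handled symmetrically: for a $K$-maximal proper face $F^*$ of $\D^*$, the set $\Psi(F^*)=\bigcap_{y^*\in F^*}H(y^*)\cap\P$ is a face of $\P$, and the hypothesis that $F^*$ is $K$-maximal is used precisely to ensure a non-vertical supporting hyperplane of $\D^*$ through $F^*$ exists, hence that the primal witness produced is finite and lies in $\P$.

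Then I would verify that $\Psi$ and $\Psi^{-1}$ are mutually inverse and inclusion-reversing. Inclusion-reversal is immediate: $\Psi$ is an intersection indexed over $y^*\in F^*$, so enlarging $F^*$ shrinks $\Psi(F^*)$, and likewise for $\Psi^{-1}$. For $\Psi\circ\Psi^{-1}=\mathrm{id}$, let $F$ be a proper face of $\P$ and $F^*:=\Psi^{-1}(F)$. Every $y\in F$ satisfies $\varphi(y,y^*)=0$ for all $y^*\in F^*$ by construction, so $F\subseteq\Psi(F^*)$; conversely, writing $F=\P\cap H(\hat y^*)$ with $\hat y^*\in\D^*$ (the non-emptiness step), one has $\hat y^*\in F^*$, so any $y\in\Psi(F^*)$ satisfies $\varphi(y,\hat y^*)=0$, i.e.\ $y\in\P\cap H(\hat y^*)=F$. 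The identity $\Psi^{-1}\circ\Psi=\mathrm{id}$ is proved the same way with the roles of $\P$ and $\D^*$ exchanged.

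Finally, the dimension formula. For a $K$-maximal proper face $F^*$ with $F=\Psi(F^*)$, one shows $\aff F=\bigcap_{y^*\in F^*}H(y^*)$ (the inclusion $\subseteq$ is clear; the reverse uses, once more, that every hyperplane containing $F$ has the form $H(\tilde y^*)$ with $\tilde y^*\in\D^*$, whence $\tilde y^*\in F^*$). This affine subspace is cut out by the equations $w(y^*)^Ty=y^*_q$, $y^*\in F^*$, so $\dim F=q-1-\dim\Span\cb{w(y^*)-w(\hat y^*)\st y^*\in F^*}$ for any fixed $\hat y^*\in F^*$. By \eqref{eq_w} the map $y^*\mapsto w(y^*)$ is affine and its linear part has one-dimensional kernel $\R e^q$; since $F^*$ is $K$-maximal it contains no segment in direction $e^q$, so the linear part is injective on the directions of $F^*$ and the spanned dimension equals $\dim F^*$. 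Hence $\dim F+\dim F^*=q-1$. I expect the main obstacle to be exactly the non-emptiness/properness bookkeeping of the second paragraph --- that \emph{every} $K$-maximal proper face of $\D^*$ and \emph{every} proper face of $\P$ admits the claimed hyperplane-intersection representation with the witness lying in the opposite image, and that being a \emph{proper} face is preserved in both directions. This is where strong duality (rather than the easy weak-duality inclusions) is indispensable, and where the hypotheses \eqref{ass_c} on $c$ and on the generator matrix $Y$ of $C$ are used.
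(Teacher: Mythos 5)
The paper itself offers no proof of this theorem: it is quoted from \cite{HeyLoe08} (and \cite{Heyde11} for the general cone), with Remark \ref{rem_3.9} merely noting that the original argument carries over once the scalarizations \eqref{p1}--\eqref{d2} are generalized. Your reconstruction follows essentially the same route as that cited proof: the polarity induced by the coupling $\varphi$ together with weak and strong duality, the observation that normals of supporting hyperplanes of $\P$ lie in $C^+$ (hence satisfy $c^T\eta>0$ and can be normalized to the form $w(y^*)$), the dual observation that $K$-maximality of $F^*$ rules out vertical supporting hyperplanes, and the rank computation for the affine map $y^*\mapsto w(y^*)$ with kernel $\R e^q$. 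The architecture is sound and all the main mechanisms are correctly identified. Two small repairs: for the properness $\Psi^{-1}(F)\neq\D^*$ the relevant fact is not solidity of $\P$ but that $\D^*_\infty\supseteq -K$, so $\D^*$ contains vertical rays and cannot lie in any non-vertical hyperplane $H^*(y)$.

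The one step that is wrong as literally stated is in the dimension formula: ``every hyperplane containing $F$ has the form $H(\tilde y^*)$ with $\tilde y^*\in\D^*$'' is false --- a hyperplane through $F$ need not support $\P$, its normal need not lie in $C^+$, and it may be vertical with respect to $c$ (i.e.\ $c^T\eta=0$), in which case it is not of the form $H(\tilde y^*)$ at all. What you actually need, and what does hold, is that $\aff F$ equals the intersection of the \emph{supporting} hyperplanes of $\P$ that contain $F$ (take an H-representation of $\P$ and intersect the active constraint hyperplanes of $F$); each such supporting hyperplane has normal in $(\P_\infty)^+\subseteq C^+$, hence is of the form $H(\tilde y^*)$ with $\tilde y^*\in\D^*$ by Theorem \ref{th_fsd}, and then $\tilde y^*\in\Psi^{-1}(F)=F^*$. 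With that substitution the chain $\bigcap_{y^*\in F^*}H(y^*)\subseteq\aff F\subseteq\bigcap_{y^*\in F^*}H(y^*)$ closes and your rank count gives $\dim F=q-1-\dim F^*$ as claimed.
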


\begin{remark}\label{rem_3.9}
The proof of the special case $c=(1,...,1)^T$ and $C=\R^q_+$ can be found in \cite{HeyLoe08}. The general case can be proved in the same way using the generalized versions of \eqref{p1}, \eqref{d1}, \eqref{p2} and \eqref{d2} as defined below. Theorem \ref{th_mr} (in the general setting) is also a special case of geometric duality theorem for convex vector optimization problems, see Example 3 in \cite{Heyde11}.
\end{remark}

\begin{remark}\label{rem_3.10}
Non-$K$-maximal proper facets (faces of dimension $q-1$) of $\D^*$ correspond to the extreme directions of $\P$ by a similar duality relation, where the coupling function $\varphi$ has to be replaced by $\hat \varphi :\R^q\times\R^q\to \R, \quad \hat\varphi(y,y^*):=\varphi(y,y^*)+y^*_q$. The case $C=\R^q_+$, $c=(1,\dots,1)^T$ has been studied in \cite{Loehne11book} and the general case is obtained likewise using the generalized variants of \eqref{p1}, \eqref{d1}, \eqref{p2} and \eqref{d2} as defined below.
\end{remark} 

The following scalarization techniques are fundamental for the algorithms described in the next section. As mentioned in Remarks \ref{rem_3.7}, \ref{rem_3.9} and \ref{rem_3.10}, they can also be used to prove weak, strong and geometric duality. The {\em weighted sum} scalarized problem for a parameter vector $w \in C^+$ satisfying $c^T w = 1$ is
\begin{equation*}\tag{P$_1(w)$}\label{p1}
	\min w^T P x  \quad \text{ subject to } \;  Bx \geq b.
\end{equation*}
Its dual problem is
\begin{equation*} \tag{D$_1(w)$}\label{d1}
 \max b^T u \quad \text{ subject to } \; \left\{
     \begin{array}{rcl}
           B^T u &=& P^T w\\
               u &\geq& 0.
     \end{array}\right.
\end{equation*}
The {\em translative scalarization} (or scalarization by reference variable) is based on problem
\begin{equation*} \tag{P$_2(y)$} \label{p2}
\min z \quad \text{ subject to } \; \left\{
     \begin{array}{rcl}
           Bx &\geq& b\\
           Z^T P x &\leq& Z^T y + z \cdot Z^T c,
     \end{array}\right.
\end{equation*}
where $Z$ is the matrix whose columns are the generating directions of the dual cone $C^+$ of the ordering cone $C$. 
The dual program is
\begin{equation*}
\tag{$\bar {\rm D}_2(y)$} \label{d2a}
 \max b^T u - y^T Z v \quad \text{ subject to } \; \left\{
     \begin{array}{rcl}
           B^T u - P^T Z v&=& 0\\
                   c^T Z v&=& 1\\
            (u,v) &\geq& 0.
     \end{array}\right.
\end{equation*}
This problem can be equivalently expressed as
\begin{equation*}
\tag{D$_2(y)$}\label{d2}
 \max b^T u - y^T w \quad \text{ subject to } \; \left\{
     \begin{array}{rcl}
           B^T u - P^T w&=& 0\\
                   c^T w&=& 1\\
                   Y^T w&\geq& 0\\
            u &\geq& 0,
     \end{array}\right.
\end{equation*}
where $Y$ is the matrix of generating directions of the ordering cone $C$. The equivalence of \eqref{d2a} and \eqref{d2} is a consequence of the following assertion. For vectors $w \in \R^q$, we have
\[ Y^T w \geq 0 \iff \forall y \in C:\; y^T w \geq 0 \iff w \in C^+ \iff \exists v \geq 0:\; w = Z v.\]

\section{Benson's algorithm and its dual variant}\label{sec_alg}
\label{sec_Bensonalgo}

Benson \cite{Benson98a,Benson98} motivated his outer approximation algorithm by practical problems that typically involve a huge number of variables and constraints and just a few objective functions. He proposed three advantages of ``outcome set-based approaches'' in comparison to ``decision set-based approaches''. First, he observed that the set of minimal elements (in the outcome space $\R^q$) has a simpler structure than the set of minimizers (in the decision space $\R^n$) because, typically, $q \ll n$. This is beneficial for computational reasons but also for the decision maker. Second, in practice, decision makers prefer to base their decisions (at least in a first stage) on objectives rather than directly on a set of efficient decisions. Third, it is generic that many feasible points are mapped on a single image point which may lead to redundant calculations of ``little or no use to the decision maker'' \cite{Benson98}.

Comparing this motivation with the notions of the previous section, we see that the solution concepts are based on exactly the same motivation (but additionally there is a theoretical motivation, see \cite{HeyLoe11,Loehne11book}). It is therefore not surprising that the variants of Benson's algorithm presented here just compute solutions in the sense of the previous section.

The dual variant of the algorithm (based on geometric duality) has been established in \cite{EhrLoeSha07,EhrLoeSha12}. It was followed by approximating variants \cite{ShaEhr08,ShaEhr08-1} and by a generalization of the primal algorithm to convex problems \cite{EhrShaSch11}. Unbounded problems have been first treated in  \cite{Loehne11book}.
\begin{definition}
Problem \eqref{p} is said to be {\em bounded}, if
\[ \exists y \in \R^q:\quad P[S] \subset \cb{y} + C.\]
\end{definition}
The generalization from $\R^q_+$ to arbitrary pointed solid polyhedral convex cones is new in this article but has already been used in \cite{LoeRud11}. We will present simplified variants where only one linear program (rather than two or three) has to be solved in one iteration\footnote{This simplification was initiated by an idea of Kevin Webster. During a lecture in the Ph.D. course in spring 2011 at ORFE, Princeton University, where the classical variant of Benson's algorithm was introduced, he proposed a variant with the two LPs \eqref{p2} and \eqref{d2}. The advantage over the classical version is that \eqref{p2} and \eqref{d2} are dual to each other. All further improvements of this article are based on this idea.}.

The idea of the primal algorithm is to evaluate the upper image $\P = P[S] + C$ of problem \eqref{p} by constructing appropriate cutting planes. This leads to an iterative refinement of an outer approximation $\T \supset \P$ by a decreasing sequence of polyhedral supersets
\[ \T^0 \supset \T^1 \supset \dots \supset \T^k = \P. \]
Both an H-representation and a V-representation of the approximating supersets $\T^i$ are stored. The algorithm terminates after finitely many steps (say $k$ steps) when the outer approximation coincides with $\P$.

Unbounded problems are treated by a two-phase method. First, one solves the homogeneous problem \eqref{ph} (which is unbounded, too) and its dual problem
\begin{equation*}\label{dh}
\tag{D$^{*h}$} \text{ maximize } D^{*h}: \R^m \times \R^q \to \R^q \text{ with respect to } \leq_K \text{ over } T
\end{equation*}
with objective function
\[ D^{*h}:\R^m\times\R^q \to \R^q,\quad D^{*h}(u,w):=\of{w_1,...,w_{q-1},0}^T.\]
To this end, \eqref{ph} is transformed into an equivalent bounded problem
\begin{equation}\tag{P$^\eta$}\label{p_eta}
 \text{ minimize } P: \R^n\to\R^q \text{ with respect to } \le_C \text{ subject to } Bx \geq 0, \; \eta^T P x \leq 1,
\end{equation}
where $\eta \in \Int (\D^{*h}+K)$ with $c^T \eta=1$ ($\D^{*h}$ denotes the lower image of \eqref{dh}).

In the second phase, a primal and dual solution of the homogeneous problem \eqref{ph} are used to calculate a primal and dual solution of the original (inhomogeneous and unbounded) problem \eqref{p}. The two-phase method requires an algorithm that works whenever an H-representation of an initial outer approximation $\T^0 \supset \P$ with $\T^0_\infty = \P_\infty$ is known. If an H-representation of $\P_\infty$ is known, that is
\[\P_\infty=\cb{y \in \R^q \st (w^i)^T y \geq 0, i=1,\dots,r},\]
and if $\gamma_i$ denotes the optimal value of (P$_1(w^i)$) for $i=1,\dots,r$, then
\[\T^0=\cb{y \in \R^q \st (w^i)^T y \geq \gamma_i, i=1,\dots,r}\]
is the desired outer approximation of $\P$ satisfying $\T^0_\infty=\P_\infty$.

If problem \eqref{p} is bounded, we have $C=\P_\infty$, i.e., an H-representation of the ordering $C$ is required. Otherwise, whenever $\eqref{p}$ is feasible, the upper image $\P^h:=P[S^h]+C$ of the homogeneous problem \eqref{ph} coincides with $\P_\infty$. By geometric duality, a dual solution to \eqref{ph} yields an H-representation of $\P^h=\P_\infty$.

The idea of such an algorithm can be explained geometrically. Let $\T^0 \supset \P$ be an initial outer representation of $\P$, i.e., $\T^0_\infty = \P_\infty$. First, the vertices of $\T^0$ are computed from its H-representation. This can be realized by {\em vertex enumeration}, which is a standard method in Computational Geometry, see e.g. \cite{BarDobHuh96,BreFukMar98}. Secondly, for a vertex $t^0 \in \T^0$, problem (P$_2(t^0)$) is solved. Usually, LP solvers yield simultaneously a solution of both the primal and the dual problem. If the optimal value of (P$_2(t^0)$) is zero, then $t^0$ belongs to $\P$ and one proceeds with the next vertex of $\T^0$. If every vertex of $\T^0$ belongs to $\P$, we have $\T^0=\P$. Otherwise, for $t^0 \not\in \P$, a solution of (P$_2(t^0)$) yields a point $s^0 \in \bd\P \cap \Int \T^0$, see Proposition \ref{prop_1} below. The solution of the dual problem, defines a supporting hyperplane $H^0$ of $\P$ that contains $s^0$. The corresponding halfspace $H^0_+$ contains $\P$ but not $t^0$. An H-representation of an improved outer approximation $\T^1 := \T^0 \cap H^0_+$ is obtained immediately. This procedure is repeated until, after finitely many steps, $\T^k = \P$. A solution $(\bar S,\bar S^h)$ of \eqref{p} is obtained by collecting those points $x$ that arise during the procedure from a solution $(x,z)$ of (P$_2(t^i)$) with zero optimal value. In this case we have $t=Px$ for some vertex $t$ of $\T^i$. Hence $Px$ is a vertex of $\P$ which implies that $x$ is a minimizer for \eqref{p}. In the unbounded case, $\bar S^h$ contains directions that originate from a solution to the homogeneous problem.
A solution of the dual vector optimization problem \eqref{d} is obtained by collecting those dual solutions of (P$_2(t^i)$) with non-zero optimal value.

\begin{proposition}\label{prop_1}
Let $S \neq \emptyset$, $C\subseteq\R^q$ a solid pointed polyhedral cone and let $c \in \Int C$. For every $t \in \R^q$, there exist optimal solutions $(\bar x,\bar z)$ to $({\rm P}_2(t))$ and $(\bar u,\bar w)$ to $({\rm D}_2(t))$. Each solution $(\bar u, \bar w)$ to $({\rm D}_2(t))$ defines a supporting hyperplane $H:=\cb{y \in \R^q \st \bar w^T y = b^T \bar u}$ of $\P$ such that $s := t + \bar z\cdot c \in H\cap \P$. We have 
\[ t \not\in \P \iff \bar z > 0, \qquad t \in \wMin\P \iff \bar z = 0,\qquad t \in \Int \P \iff \bar z < 0.\]
\end{proposition}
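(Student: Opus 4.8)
The plan is to extract everything from linear programming duality applied to the primal--dual pair $(\mathrm{P}_2(t))$ and $(\mathrm{D}_2(t))$, together with the geometric meaning of the constraint $Z^T P x \le Z^T y + z\cdot Z^T c$, which says exactly that $P x \le_C y + z\cdot c$, i.e.\ $y + z c \in P x + C$. First I would establish solvability: $(\mathrm{D}_2(t))$ is feasible because $T \neq \emptyset$ (since $S\neq\emptyset$ and the dual feasibility does not depend on $t$); one must then argue its objective $b^T u - t^T w$ is bounded above. This follows from weak duality against $(\mathrm{P}_2(t))$, so it suffices to show $(\mathrm{P}_2(t))$ is feasible: pick any $x \in S$ and then choose $z$ large enough that $Z^T P x \le Z^T t + z\cdot Z^T c$ holds componentwise --- possible because $c \in \Int C$ forces $Z^T c > 0$ componentwise (each generating direction of $C^+$ has strictly positive inner product with an interior point of $C$). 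Hence both programs are feasible, and by the LP duality theorem both are solvable with equal optimal value. Denote the common value $\bar z$.

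Next I would read off the supporting hyperplane. Given an optimal $(\bar u,\bar w)$, the constraints of $(\mathrm{D}_2(t))$ give $B^T \bar u = P^T \bar w$, $\bar u \ge 0$, and $\bar w \in C^+$ (using the stated equivalence $Y^T \bar w \ge 0 \iff \bar w \in C^+$). For any $y = Px + k \in \P$ with $x\in S$, $k \in C$, we get $\bar w^T y = \bar w^T P x + \bar w^T k = \bar u^T B x + \bar w^T k \ge \bar u^T b + 0 = b^T\bar u$, so $H_+ := \{y : \bar w^T y \ge b^T\bar u\}$ contains $\P$; since $\bar w \neq 0$ (from $c^T\bar w = 1$), $H$ is a genuine hyperplane. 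Then I would show $s := t + \bar z c$ attains equality: take an optimal $(\bar x,\bar z)$ for $(\mathrm{P}_2(t))$, so $s \in P\bar x + C \subseteq \P$, and compute $\bar w^T s = \bar w^T P\bar x + \bar z\, c^T\bar w \ge b^T\bar u + \bar z$ by the same chain; but complementary slackness / equality of optimal values forces this to be an equality, giving $\bar w^T s = b^T\bar u$, i.e.\ $s \in H \cap \P$. (Alternatively, use strong duality directly: the optimal value $\bar z$ of $(\mathrm{P}_2(t))$ equals $b^T\bar u - t^T\bar w$, and rearranging gives $\bar w^T(t + \bar z c) = b^T\bar u$ using $c^T\bar w = 1$.) This last rearrangement is the cleanest route and I expect it to be the technical heart of the argument.

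Finally I would verify the three equivalences via the optimal value $\bar z$. By definition $t \in \P$ iff $\exists x\in S,\ k\in C$ with $t = Px + k$, i.e.\ iff $(x,0)$ is feasible for $(\mathrm{P}_2(t))$ for some $x$; thus $t\in\P \iff \bar z \le 0$. For the strict statements: if $\bar z < 0$ then $t = s - \bar z c$ with $s \in \P$ and $-\bar z c \in \Int C$ (positive multiple of an interior point), so $t \in s - \Int C + \ldots$; more precisely $t \in \Int(\P)$ because $\P + \Int C \subseteq \Int\P$ (as $\P = \P + C$ is solid with recession directions including $\Int C$), and conversely if $t\in\Int\P$ then small perturbations $t - \delta c$ still lie in $\P$, forcing $\bar z < 0$. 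The remaining case $\bar z = 0$ is then exactly $t \in \P \setminus \Int\P = \bd\P$, and I would identify this boundary with $\wMin_C \P$: a boundary point of $\P = \P + C$ cannot have $(t - \Int C)\cap\P \neq\emptyset$ (that would push $t$ into the interior), and every point of $\P$ with $(t-\Int C)\cap\P=\emptyset$ is on the boundary; so $\bd\P = \wMin_C\P$. Combining the three exhaustive and mutually exclusive cases $\bar z<0$, $\bar z=0$, $\bar z>0$ with the corresponding locations of $t$ gives the stated three equivalences. The one place to be careful is the claim $\P + \Int C \subseteq \Int\P$ and $\bd\P = \wMin_C\P$: both rely on $\P$ being a solid polyhedral set with $\P = \P + C$ and $C$ solid, which holds here since $c\in\Int C$ and $\P \supseteq \{y\}+C$ is nonempty for bounded directions --- this is the main obstacle in that it needs a short but careful topological argument rather than pure LP duality.
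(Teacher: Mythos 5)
Your argument follows the paper's route almost everywhere: feasibility of $({\rm P}_2(t))$ from $S\neq\emptyset$ together with $Z^Tc>0$; the hyperplane property of $H$ from the dual constraints (the paper phrases this as weak duality between $({\rm P}_2(y))$ and $({\rm D}_2(y))$ for arbitrary $y\in\P$, which is the same computation you carry out directly); the membership $s\in H\cap\P$ from the duality relation $b^T\bar u-t^T\bar w=\bar z$ combined with $c^T\bar w=1$ and $P\bar x\le_C t+\bar z c$; and the three equivalences from $\bd\P=\wMin\P$ (the paper dismisses these as obvious, so your extra care about $\P+\Int C\subseteq\Int\P$ only adds detail that the paper omits).

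The one genuine gap is at the very start, in the solvability claim. You assert that $({\rm D}_2(t))$ is feasible ``because $T\neq\emptyset$, since $S\neq\emptyset$ and dual feasibility does not depend on $t$.'' It is true that $T$ does not depend on $t$, but $S\neq\emptyset$ does not by itself give $T\neq\emptyset$: nonemptiness of $T$ amounts to finding $w\in C^+$ with $c^Tw=1$ for which $({\rm D}_1(w))$ is feasible, i.e.\ for which $({\rm P}_1(w))$ is bounded below, and that is precisely the nontrivial content of the existence statement, not a free consequence of primal feasibility. Since you have already established that $({\rm P}_2(t))$ is feasible, the missing step is to show that $({\rm P}_2(t))$ is \emph{bounded below}; this is where the paper does its real work: if $({\rm P}_2(t))$ were unbounded one would have $t+(z-n)c-Px\in C$ for all $n\in\N$, and dividing by $n$ and letting $n\to\infty$ gives $-c\in C$, hence $0\in\Int C$ and $C=\R^q$, contradicting the pointedness (non-triviality) of $C$. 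Only after this does LP duality deliver optimal solutions to both $({\rm P}_2(t))$ and $({\rm D}_2(t))$, and in particular $T\neq\emptyset$. Everything downstream of this point in your proposal is sound.
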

\begin{proof} Fix $t\in \R^q$. Since $S\neq \emptyset$ and $c \in \Int C$, $({\rm P}_2(t))$ is feasible. Assuming  
$({\rm P}_2(t))$ is not bounded, we obtain $t + (z-n)c - P x \in C$ for all $n \in \N$. Dividing by $n$ and letting $n\to \infty$, we conclude $-c \in C$. As $c \in \Int C$, convexity of $C$ implies $0 \in \Int C$. Thus $C=\R^q$, a contradiction. Consequently, $({\rm P}_2(t))$ and, by duality, also   
$({\rm D}_2(t))$ have optimal solutions $(\bar x,\bar z)$ and $(\bar u, \bar w)$, respectively. Duality yields $b^T \bar u - t^T \bar w = \bar z$ and thus $s=t+ \bar z c$ belongs to $H$. Of course, $H$ is a hyperplane as the constraint $\bar w^T c =1$ of $({\rm D}_2(t))$ implies $\bar w \neq 0$. From $P \bar x \leq_C t + z\cdot c$ we conclude that $s=t+ \bar z c$ belongs to $\P$. For arbitrary $y \in \P$, there exists $x \in S$ such that $y \geq_C P x$. Hence $(x,0)$ is feasible for $({\rm P}_2(y))$. Weak duality between $({\rm P}_2(y))$ and $({\rm D}_2(y))$ implies that $y^T w \geq b^T u$ for every $(u,w)\in T$, in particular, $y^T \bar w \geq b^T \bar u$. Hence $H = \cb{y \in \R^q \st y^T \bar w = b^T \bar u}$ is a supporting hyperplane to $\P$. The remaining statements are now obvious, where the fact $\wMin \P = \bd \P$ can be used.
\end{proof}

\begin{proposition}\label{prop_2}
Every vertex of $\P$ is minimal.
\end{proposition}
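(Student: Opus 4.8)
The plan is to show that a vertex $v$ of $\P$ is $C$-minimal in $\P$, i.e. that $\of{\cb{v}-C\smz}\cap\P=\emptyset$, arguing by contradiction. So first I would suppose there is a point $y\in\P$ with $v-y\in C\smz$, and set $k:=v-y$, so that $k\in C$ and $k\neq 0$.

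The key observation is that the upper image $\P=P[S]+C$ absorbs the cone, i.e. $\P+C=\P$, which is immediate since $C$ is a convex cone and hence closed under addition. Consequently, from $v\in\P$ and $k\in C$ we obtain $v+k\in\P$, while also $y=v-k\in\P$. Then
\[
v=\tfrac12\,y+\tfrac12\,(v+k)
\]
exhibits $v$ as a proper convex combination (weight $\lambda=\tfrac12\in(0,1)$) of the two points $y, v+k\in\P$. Since $\cb{v}$ is, by definition of a vertex, a face of $\P$, the defining property of a face forces $y=v$ and $v+k=v$, hence $k=0$ — contradicting $k\neq0$. Therefore no such $y$ exists and $v\in\Min_C\P$.

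I expect essentially no serious obstacle: the argument is short and rests only on (i) the trivial fact $\P+C=\P$ coming straight from the form $\P=P[S]+C$, and (ii) the definition of a face applied to the segment $[y,v+k]$, which passes through $v$ and is nondegenerate precisely because $k\neq0$. The only point to state with care is that nondegeneracy (so that $v$ lies strictly between the endpoints); everything else is routine. One could equivalently run the same proof via a supporting hyperplane $H$ with $\cb{v}=H\cap\P$, noting that both $y$ and $v+k$ would then be forced into $H\cap\P=\cb{v}$, but the face formulation is cleaner and is exactly what Definition-level terminology in the preliminaries provides.
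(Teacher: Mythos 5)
Your proof is correct and uses essentially the same argument as the paper: both exhibit the candidate point as the midpoint $\tfrac12(v-k)+\tfrac12(v+k)$ of two points of $\P$ (using $\P + C = \P$) and conclude from the definition of a vertex/face. The only cosmetic difference is that you argue by contradiction where the paper proves the contrapositive.
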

\begin{proof}
Let $y \in \P = P[S] + C$ be not minimal for $\P$. Then there are $z \in \P$ and $k \in C\setminus \cb{0}$ such that $z = y - k$. The points $y - k$ and $y + k$ belong to $\P$ and we have $y = \frac{1}{2}(y - k) + \frac{1}{2}(y + k)$. Hence $y$ is not a vertex of $\P$.
\end{proof}

Two functions are used in the following algorithm. The function {\em dual()} computes a V-representation of an outer approximation $\T$ from an H-representation of $\T$, i.e., this function consists essentially of vertex enumeration. This H-representation of $\T$, however, is stored in a dual format, namely, as a V-representation of an inner approximation $\T^*$ of the lower image $\D^*$ of \eqref{d}, where 
\begin{equation}\label{tts}
\T^*=\cb{y^*\in \R^q \st \varphi(y,y^*) \geq 0, y \in \T}.
\end{equation}
The following duality relation holds.
\begin{proposition}\label{prop_tst}
If $\emptyset\neq\T\subsetneq\R^q$ is closed and convex and $\T_\infty\supseteq C$, then
\begin{equation}\label{tst}
\T=\cb{y \in \R^q \st \varphi(y,y^*) \geq 0, y^* \in \T^*}.
\end{equation}
\end{proposition}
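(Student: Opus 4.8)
The plan is to prove \eqref{tst} as a bipolar-type identity for the coupling $\varphi$. The inclusion ``$\subseteq$'' is immediate: if $y\in\T$, then by the very definition of $\T^*$ in \eqref{tts} we have $\varphi(y,y^*)\geq 0$ for every $y^*\in\T^*$, so $y$ belongs to the right-hand side of \eqref{tst}. For the reverse inclusion I would argue by contraposition, exhibiting for a given $y^0\notin\T$ a point $y^*\in\T^*$ with $\varphi(y^0,y^*)<0$.

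First I would apply the separation theorem. Since $\T$ is non-empty, closed, convex and $\T\subsetneq\R^q$, there exist $a\in\R^q\smz$ and $\beta\in\R$ with $a^Ty\geq\beta$ for all $y\in\T$ and $a^Ty^0<\beta$. The key structural step is to observe $a\in C^+$: for any $k\in C\subseteq\T_\infty$ and any $y\in\T$ the ray $y+\R_+k$ stays in $\T$, so $a^T(y+tk)\geq\beta$ for all $t\geq 0$, which forces $a^Tk\geq 0$. Because $c\in\Int C$ and $a\neq 0$, this yields $a^Tc>0$ (otherwise a small perturbation of $c$ in the direction $-a$ would remain in $C$ and violate $a\in C^+$).

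Next I would turn $a$ into a feasible $y^*$. Recall from \eqref{eq_w} and \eqref{eq_phiw} that $w(y^*)=\of{y^*_1,\dots,y^*_{q-1},\,1-\sum_{i=1}^{q-1}c_iy^*_i}$ and $\varphi(y,y^*)=w(y^*)^Ty-y^*_q$, and note that $c^Tw(y^*)=1$ holds identically because $c_q=1$. Setting $\lambda:=1/(a^Tc)>0$, $y^*_i:=\lambda a_i$ for $i=1,\dots,q-1$, and $y^*_q:=\lambda\beta$, a short computation using $c_q=1$ and $\sum_{i=1}^{q-1}c_ia_i+a_q=a^Tc$ gives $w(y^*)=\lambda a$. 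Hence $\varphi(y,y^*)=\lambda(a^Ty-\beta)\geq 0$ for all $y\in\T$, i.e.\ $y^*\in\T^*$, while $\varphi(y^0,y^*)=\lambda(a^Ty^0-\beta)<0$; thus $y^0$ is not in the right-hand side of \eqref{tst}, which finishes the proof.

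There is no deep obstacle here; the only step that genuinely consumes all the hypotheses is the chain $\T_\infty\supseteq C$ and $c\in\Int C \Rightarrow a\in C^+ \Rightarrow a^Tc>0$, since this positivity is exactly what makes the normalization $\lambda=1/(a^Tc)$ legitimate and lets the separating functional $a$ be realized as $w(y^*)$ for an admissible $y^*$. The assumptions $\emptyset\neq\T\subsetneq\R^q$ are what make the separation theorem applicable in the first place.
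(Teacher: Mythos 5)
Your proof is correct and follows essentially the same route as the paper: separate $y^0\notin\T$ from $\T$, use $\T_\infty\supseteq C$ to place the separating functional in $C^+$, normalize against $c\in\Int C$, and realize it as $w(y^*)$ for a $y^*\in\T^*$ with $\varphi(y^0,y^*)<0$. You merely spell out two steps the paper compresses into ``applying separation arguments, we get $\eta\in C^+\smz$'' and ``we can assume $\eta^Tc=1$,'' namely why the separating vector lies in $C^+$ and why its pairing with $c$ is strictly positive.
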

\begin{proof} The inclusion $\subseteq$ is obvious. Assume that the inclusion $\supseteq$ does not hold, i.e., there exists $\bar y \in \R^q\setminus \T$ with $\varphi(\bar y,y^*)\geq 0$ for all $y^* \in \T^*$. Applying separation arguments, we get $\eta \in C^+\smz$ with $\eta ^T \bar y < \inf_{y \in \T} \eta^T y := \gamma$. By \eqref{ass_c}, we can assume $\eta^T c = 1$. Setting $\bar y^*:=(\eta_1,\dots,\eta_{q-1},\gamma)$, we get $w(\bar y^*)=\eta$ and $\varphi(y,\bar y^*)=w(\bar y^*)-\bar y^*_q=\eta^T y-\gamma$. For all $y \in \T$, we have $\eta^T y-\gamma \geq 0$, i.e., $\bar y^* \in \T^*$. But $\varphi(\bar y,\bar y^*)=\eta^T \bar y-\gamma< 0$, a contradiction.
\end{proof}

In the following algorithm, a V-representation of a polyhedron $\T$ (that contains no lines) is denoted by $(\T_{points},\T_{dirs})$, i.e., $\T = \conv \T_{points} + \cone \T_{dirs}$. We assume further that a V-representation returned by the function {\em dual()} is {\em minimal}, i.e., $\T_{points}$ consists of only vertices of $\T$ and $\T_{dirs}$ consists of only extreme directions of $\T$.

The function {\em solve()} returns an optimal solution $(x,z)$ of (P$_2(t)$) and an optimal solution $(u,w)$ of (D$_2(t)$). Since (D$_2(t)$) is, up to a substitution, the dual program of (P$_2(t)$), only one linear program has to be solved.

The variables in the following algorithm are arrays of vectors. By $\abs{A}$ we denote the number of vectors in an array $A$ and by $A[i]$ we refer to the $i$-th vector in $A$. The command {\em break} exits the inner-most loop.

\subsubsection*{Algorithm 1.}
 
Input:\\
\indent $B, b, P, Z$ (data of \eqref{p});\\
\indent a solution $(\cb{0},\bar S^h)$ to \eqref{ph};\\
\indent a solution $\bar T^h$ to \eqref{dh};

\noindent Output:\\
\indent $(\bar S,\bar S^h)$ ... a solution to \eqref{p};\\
\indent $\bar T$ ... a solution to \eqref{d};\\
\indent $(\T_{points},\T_{dirs})$ ... a V-representation of $\P$;\\
\indent $(\T^*_{points},\cb{-e^q})$ ... a V-representation of $\D^*$;

\noindent \\
\indent $\bar T \leftarrow \cbg{\ofg{\text{solve(D$_1$($w$))},w} \big|\; (u,w) \in \bar T^h}$;\\
\indent {\bf repeat}\\
\indent\indent $flag \leftarrow 0$;\\
\indent\indent $\bar S \leftarrow \emptyset$;\\
\indent\indent $\T^*_{points} \leftarrow \cb{D^*(u,w)\st (u,w) \in \bar T}$;\\
\indent\indent $(\T_{points},\T_{dirs}) \leftarrow \dual(\T^*_{points},\cb{-e^q})$;\\
\indent\indent {\bf for} $i=1$ {\bf to} $\abs{\T_{points}}$ {\bf do}\\
\indent\indent\indent $t \leftarrow \T_{points}[\,i\,]$;\\
\indent\indent\indent $(x,z,u,w)\leftarrow$ solve(P$_2$($t$)/D$_2$($t$));\\
\indent\indent\indent {\bf if} $z > 0$ {\bf then}\\
\indent\indent\indent\indent $\bar T \leftarrow \bar T\cup \cb{(u,w)}$;\\
\indent\indent\indent\indent $flag \leftarrow 1$;\\
\indent\indent\indent\indent break; (optional)\\
\indent\indent\indent {\bf else}\\
\indent\indent\indent\indent $\bar S \leftarrow \bar S \cup \cb{x}$;\\
\indent\indent\indent {\bf end if};\\
\indent\indent {\bf end for};\\
\indent {\bf until $flag = 0$};\\

\begin{theorem} \label{corr_fin_1}
 Let $S\neq \emptyset$, suppose $\P^h$ has a vertex and assume that the command
\[ (\T_{points},\T_{dirs}) \leftarrow \dual(\T^*_{points},\T^*_{dirs})\]
generates a minimal V-representation of $\T$ from a given V-representation of $\T^*$ according to \eqref{tts}. Then Algorithm 1 is correct and finite.
\end{theorem}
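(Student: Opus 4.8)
The plan is to run through the \textbf{repeat}-loop while maintaining an invariant describing the stored data, to deduce termination from the fact that only finitely many objects can ever be appended to $\bar T$, and to read off correctness from the terminal configuration. At the top of a pass write $\T^*_{points}=D^*[\bar T]$ and let $\T=\cb{y\in\R^q\st\varphi(y,y^*)\ge 0,\ y^*\in\T^*_{points}}$ be the outer polyhedron the algorithm works with (this is how \emph{dual()} interprets its input, cf.\ \eqref{tst}; the generator $-e^q$ contributes no constraint), and put $\mathcal Q:=\conv\T^*_{points}+\cone\cb{-e^q}$. I would prove by induction on the pass number that: (i)~$\bar T\subseteq T$ and every $(u,w)\in\bar T$ is a maximizer for \eqref{d}; (ii)~$\mathcal Q\subseteq\D^*$; (iii)~$\emptyset\neq\P\subseteq\T\subsetneq\R^q$, $\T$ is closed, convex and line-free, and $\T_\infty=\P_\infty$. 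For the base case I rely on the construction recalled in the discussion preceding Proposition~\ref{prop_1}: since $\bar T^h$ solves \eqref{dh}, geometric duality for the homogeneous problem turns its $w$-components into an H-representation of $\P^h=\P_\infty$ (here $S\neq\emptyset$ is used, so $\P^h=\P_\infty$), whence the initial $\T$ is exactly the set $\T^0\supseteq\P$ with $\T^0_\infty=\P_\infty$ described there; its being line-free is precisely the hypothesis that $\P^h$ has a vertex, because a convex cone has a vertex iff it is pointed and $\P_\infty$ is pointed iff $\P$ — hence $\T^0$ — contains no line. Item~(i) holds initially because each $u$ returned by \emph{solve} for (D$_1(w)$) is optimal for (D$_1(w)$), and (D$_1(w)$) is exactly the problem comparing two $T$-feasible points having the same $D^*$-image up to the last coordinate, so optimality there is $K$-maximality for \eqref{d}; (ii) is immediate since $\T^*_{points}\subseteq\D^*$ and $\cone\cb{-e^q}=-K\subseteq(\D^*)_\infty$.

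For the induction step I would examine one pass of the \textbf{for}-loop. Proposition~\ref{prop_1} (using $S\neq\emptyset$, $c\in\Int C$) yields, for each vertex $t$ of $\T$, optimal $(x,z)$ and $(u,w)\in T$ with $z=b^Tu-t^Tw$ and the trichotomy $z>0\iff t\notin\P$, $z=0\iff t\in\bd\P$, $z<0\iff t\in\Int\P$; since $\P$, hence $\T$, is solid and $t$ is a vertex of $\T$, we have $t\notin\Int\T\supseteq\Int\P$, so $z\ge 0$ always. If $z>0$, the appended $(u,w)$ keeps~(i) (otherwise a $u'$ better for (D$_1(w)$) would beat $(u,w)$ in (D$_2(t)$)), keeps~(ii) since $D^*(u,w)\in\D^*$, and replaces $\T$ by $\T\cap\cb{y\st w^Ty\ge b^Tu}$, which by Proposition~\ref{prop_1} still contains $\P$, is still line-free, and has unchanged recession cone because $w\in C^+$ and $w^TPk=u^TBk\ge 0$ for every $k\in S^h$, so $\P_\infty\subseteq\cb{y\st w^Ty\ge 0}$. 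If $z=0$, the second constraint of (P$_2(t)$) (with $Z$ generating $C^+$) gives $Px\leq_C t$; if $Px\neq t$, then $t=\frac{1}{2}Px+\frac{1}{2}(2t-Px)$ with $Px,\,2t-Px\in\P\subseteq\T$, contradicting that $t$ is a vertex of $\T$; hence $Px=t$ is a vertex of $\P$, and $x$ is a minimizer for \eqref{p} by Proposition~\ref{prop_2} — which is the only way a point enters $\bar S$.

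For termination: each time $flag$ becomes $1$, at least one $(u,w)$ is appended, and the same $(u,w)$ is never appended twice, since once $(u,w)\in\bar T$ the current $\T$ lies in $\cb{y\st w^Ty\ge b^Tu}$, so any later $t$ for which \emph{solve} returns this $(u,w)$ has $z=b^Tu-w^Tt\le 0$. As \emph{solve} returns a basic optimal solution of (D$_2(t)$), whose feasible set is $T$, and $T$ has finitely many vertices, only finitely many $(u,w)$ can ever be appended; thus $flag=1$ occurs only finitely often, each pass otherwise being a finite vertex enumeration plus one LP per vertex, and the algorithm stops with $flag=0$. In that terminal pass every vertex $t$ of $\T$ produced $z=0$, i.e.\ $t\in\P$; together with $\T_\infty=\P_\infty$ and $\T=\conv(\vertices\T)+\T_\infty$ (valid as $\T$ is nonempty and line-free) this gives $\T\subseteq\conv\P+\P_\infty=\P$, so $\T=\P$, and hence $(\T_{points},\T_{dirs})$ — a minimal V-representation of $\T$ — is one of $\P$. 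The set $\bar S$ of this pass consists of minimizers with $P[\bar S]=\vertices\P$; since $(\cb{0},\bar S^h)$ solves \eqref{ph}, infimum attainment gives $\cone P[\bar S^h]+C=P[S^h]+C=\P_\infty$ and $\bar S^h$ consists of minimizing directions for \eqref{p}; therefore $\conv P[\bar S]+\cone P[\bar S^h]+C=\conv(\vertices\P)+\P_\infty=\P=P[S]+C$, so $(\bar S,\bar S^h)$ is a finite infimizer made of minimizers, i.e.\ a solution to \eqref{p}.

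The delicate part is the dual output: that $\bar T$ is a solution to \eqref{d} and $(\T^*_{points},\cb{-e^q})$ is a V-representation of $\D^*$, equivalently $\mathcal Q=\D^*$. Here $\subseteq$ is~(ii); for $\supseteq$ I would invoke strong duality (Theorem~\ref{th_fsd}; $S\neq\emptyset$, and $T\neq\emptyset$ since \eqref{dh}, with feasible set $T$, admits the solution $\bar T^h$), giving $\D^*=\cb{y^*\st\varphi(y,y^*)\ge 0\ \forall y\in\P}$, and a routine computation (the feasible $w$'s form the polytope $\cb{w\in C^+\st c^Tw=1}$, $c\in\Int C$, so the first $q-1$ coordinates of $D^*[T]$ are bounded and the last is bounded above because $S\neq\emptyset$) shows $\D^*$ is a line-free polyhedron with $(\D^*)_\infty=\cone\cb{-e^q}$; thus $\D^*=\conv(\vertices\D^*)+\cone\cb{-e^q}$, so it suffices to prove $\vertices\D^*\subseteq\T^*_{points}$. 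Let $\bar y^*$ be a vertex of $\D^*$. Since $-e^q\in(\D^*)_\infty$ but $e^q\notin(\D^*)_\infty$, no $\bar y^*+\mu e^q$ with $\mu>0$ lies in $\D^*$, so $\cb{\bar y^*}$ is a $K$-maximal proper face; by Theorem~\ref{th_mr}, $\Psi(\cb{\bar y^*})=H(\bar y^*)\cap\P$ is a face of $\P$ of dimension $q-1$, i.e.\ a facet, supported by $H(\bar y^*)=\cb{y\st w(\bar y^*)^Ty=\bar y^*_q}$. At termination $\P=\T=\cb{y\st w(y^*)^Ty\ge y^*_q,\ y^*\in\T^*_{points}}$, and a $(q-1)$-dimensional face of $\P$ must lie in $H(y^*)\cap\P$ for some $y^*\in\T^*_{points}$; this forces $H(y^*)=H(\bar y^*)$, so $w(y^*)$ is a positive multiple of $w(\bar y^*)$. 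But every vector $w(\cdot)$ lies on the affine hyperplane $\cb{w\st c^Tw=1}$ (because $c_q=1$), which meets each ray through the origin at most once; hence $w(y^*)=w(\bar y^*)$, pinning down $y^*_1,\dots,y^*_{q-1}$, and then $H(y^*)=H(\bar y^*)$ pins down $y^*_q=\bar y^*_q$, i.e.\ $\bar y^*\in\T^*_{points}$. This yields $\mathcal Q=\D^*$; consequently $\bar T$ is a finite supremizer and, by~(i), consists of maximizers, hence is a solution to \eqref{d}, and $(\T^*_{points},\cb{-e^q})$ represents $\D^*$ since $(\D^*)_\infty=\cone\cb{-e^q}$. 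The true obstacle is this last point — one must certify that the \emph{accumulated} cutting planes recover not merely $\P$ as a set but its facial structure, so that their $\varphi$-duals exhaust the vertices of $\D^*$ — and this is exactly where geometric duality and the normalization $c^Tw=1$ become indispensable.
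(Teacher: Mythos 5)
Your argument is correct in substance and shares the paper's overall architecture (maintain an outer approximation $\T\supseteq\P$ with $\T_\infty=\P_\infty$, use Proposition~\ref{prop_1} to classify vertices and generate cuts, use Proposition~\ref{prop_2} for minimality of the collected $x$'s, and invoke strong duality at termination), but it departs from the paper's proof at two points. First, your finiteness argument counts the distinct dual optimal pairs $(u,w)$ that can ever be appended to $\bar T$, bounding them by the vertices of the dual feasible set $T$; this works only under the extra (unstated in the theorem, though standard for simplex-type solvers) assumption that \emph{solve()} returns a \emph{basic} optimal solution of (D$_2(t)$) — if the solver may return non-vertex optima, infinitely many distinct $(u,w)$ could in principle appear and your count fails. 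The paper avoids this by a different bookkeeping: the boundary point $s^k=t^k+z^kc$ produced by a cut lies on a face $F^k$ of $\P$ contained in $\bd\T^k$, while all later $s^j$ lie in $\Int\T^k$, so the faces $F^k$ are pairwise distinct and $\P$ has only finitely many faces. (A minor related imprecision: in the `no break' variant the same $(u,w)$ can be appended twice \emph{within} one pass, since $\T$ is not recut inside the \textbf{for}-loop; this does not harm finiteness because each flagged pass still adds at least one element new relative to the start of that pass.) Second, for the dual output you prove $\conv\T^*_{points}-K=\D^*$ by showing every vertex of $\D^*$ already lies in $\T^*_{points}$, via the geometric duality Theorem~\ref{th_mr} (vertices of $\D^*$ correspond to facets of $\P=\T$, each facet is carried by one of the accumulated cutting hyperplanes, and the normalization $c^Tw=1$ makes the hyperplane determine $y^*$ uniquely). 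The paper disposes of this in one line by citing Theorem~\ref{th_fsd} together with the convention \eqref{tts} for how $\T^*$ is stored; your version makes explicit the biduality step that the paper leaves implicit, which is a genuine gain in rigor at the cost of invoking the heavier Theorem~\ref{th_mr}. Everything else — the base case via the homogeneous dual solution, the argument that $z=0$ forces $Px=t$ at a vertex, the maximality of appended $(u,w)$ via optimality in (D$_1(w)$) — matches the paper's reasoning.
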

\begin{proof}
As $\bar T^h$ is non-empty (by the definition of a solution), we can choose some $(u,w)\in \bar T^h$. Then $D^{*h}(u,w)=(w_1,\dots,w_{q-1},0)$ is $K$-maximal in $\D^{*h}$. Hence $u$ solves the homogeneous variant (i.e., we set $b=0$) of \eqref{d1}. Consequently, (D$_1$($w$)) (for arbitrary $b$) is feasible. Since $S \neq \emptyset$, (P$_1$($w$)) is feasible, too. Thus, by linear programming duality, \eqref{d1} has a solution.

The set $\T^*:=\conv \T^*_{points} + \cone \cb{-e^q}$ is a non-empty subset of $\D^*$. Hence, by  Theorem \ref{th_wgd}, after calling the function {\em dual()}, $\T:=\conv \T_{points} + \cone \T_{dirs}$ is a superset of $\P$.

As $\bar T^h$ solves the dual of the homogeneous problem, we have $\T_\infty=\P_\infty=\P^h$, see \cite[Section 4.6]{Loehne11book} for more details. As $\P^h$ is assumed to have a vertex, $\T$ must have a vertex, hence the array $\T_{points}$ is non-empty. 

By Proposition \ref{prop_1}, solutions to (P$_2$($t$)) and (D$_2$($t$)) exist.
The vectors $x \in \bar S$ are minimizers of \eqref{p}. Indeed, $x$ is added to $\bar S$ only if $z=0$. In this case, we have $t \in \P$, where $t$ is a vertex of $\T\supseteq \P$ because, by assumption, $\T_{points}$ contains only vertices of $\T$. Hence $t$ is a vertex of $\P$ and, by Proposition \ref{prop_2}, $t$ is a minimizer for \eqref{p}.  

The algorithm terminates if all vertices of $\T$ belong to $\P$. Since $\P_\infty=\T_\infty$, we conclude $\P=\T$, i.e., $(\bar S,\bar S^h)$ is an infimizer of \eqref{p} and $(\T_{points},\T_{dirs})$ is a V-representation of $\P$.

A solution $(u,w)$ to (D$_2$($t$)) is always a maximizer of $\eqref{d}$, i.e., $\bar T$ consists of only maximizers. Since at termination $\T=\P$, Theorem \ref{th_fsd} implies $\T^*=\D^*$ and thus $\bar T$ is a supremizer for \eqref{d} and $(\T^*_{points},\cb{-e_q})$ is a V-representation of $\D^*$. 

Finally we show that the algorithm terminates after a finite number of steps.
The point $s^k:=t^k + z^k \cdot c$ computed in iteration $k$ (consider the `repeat' loop) by solving 
(P$_2$($t^k$)/D$_2$($t^k$)) belongs to $\Int \T^{k-1}$ whenever $z^k>0$. We have $ \T^k :=  \T^{k-1} \cap \{ y \in \R^q \st (w^k)^T y \geq (u^k)^T b\}$ and by Proposition \ref{prop_1} we know that $F:=\{ y \in \P \st (w^k)^T y = (u^k)^T b\}$ is a face of $\P$ with $s^k \in  F$, where $F \subset \bd  \T^k$. This means for the next iteration that $s^{k+1} \not \in F$ (because $s^{k+1} \in \Int \T^k$), and therefore $s^{k+1}$ belongs to another face of $\P$. Since $\P$ is polyhedral, it has a finite number of faces, hence the algorithm is finite.
\end{proof}

We now turn to the dual variant of Algorithm 1. An analogous construction is now applied to the lower image $\D^*$, i.e., a finite sequence of polyhedral sets
\[ \T^{*0} \supseteq \T^{*1} \supseteq ,\dots, \supseteq \T^{*k} = \D^*\]
is calculated. Using the upper image $\P^h$ (which is a polyhedral cone) of the homogeneous problem \eqref{ph}, we define the set
\[ \Delta := \cb{y^* \in \R^q \st w(y^*) \in (\P^h)^+}. \]
The counterpart of Proposition \ref{prop_1} is the following.

\begin{proposition}\label{prop_d1}
Let $S\neq \emptyset$ and $t^* \in \Delta$. For $w:=w(t^*)$, $({\rm P}_1(w))$ has a solution and for every such solution $\bar x$, $H^*(P \bar x)$ is a supporting hyperplane of $\D^*$ that contains 
\begin{equation}\label{eq_sst}
  s^*:=(t^*_1,\dots,t^*_{q-1},w^T P \bar x) \in \textstyle\Max_K \D^*.
\end{equation}
Moreover, we have 
\[ 
t^* \not\in \D^* \iff w^T P\bar x < t^*_q, \quad\qquad 
t^* \in \textstyle\Max_K \D^* \iff w^T P\bar x = t^*_q.
\]
\end{proposition}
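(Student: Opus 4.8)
The plan is to mirror the argument of Proposition \ref{prop_1}, but now working on the dual side via the coupling function $\varphi$ and the geometric duality dictionary of Theorems \ref{th_wgd}, \ref{th_fsd} and \ref{th_mr}. First I would show that $({\rm P}_1(w))$ has a solution for $w = w(t^*)$. Feasibility of $({\rm P}_1(w))$ is immediate since $S \neq \emptyset$. For boundedness (and hence solvability by LP duality), I would use the hypothesis $t^* \in \Delta$, i.e. $w(t^*) \in (\P^h)^+$: since $\P = P[S] + C$ has recession cone $\P^h$, the linear functional $y \mapsto w^T y$ is bounded below on $\P$, hence $w^T P x$ is bounded below on $S$; note also $w = w(t^*) \in C^+$ follows from $\P^h \supseteq C$ (as $C$ is the recession cone part when (P) is bounded, and in general $C \subseteq \P^h$), so the scalarization is of the admissible type. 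Thus $({\rm P}_1(w))$ has an optimal solution $\bar x$, with optimal value $\gamma := w^T P \bar x$.

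Next I would identify the dual feasible point and the supporting hyperplane. From $\bar x$ solving $({\rm P}_1(w))$, LP duality for the pair $({\rm P}_1(w))$/$({\rm D}_1(w))$ produces $\bar u \geq 0$ with $B^T \bar u = P^T w$ and $b^T \bar u = \gamma = w^T P\bar x$. Setting $w := w(t^*)$, this means $(\bar u, w)$ is feasible for \eqref{d} (the remaining constraints $c^T w = 1$, $Y^T w \geq 0$ hold because $w = w(t^*)$ with $w \in C^+$), so $D^*(\bar u, w) = (t^*_1,\dots,t^*_{q-1}, b^T\bar u) = (t^*_1,\dots,t^*_{q-1},\gamma) = s^* \in \D^*$. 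To see that $s^* \in \Max_K \D^*$: increasing only the last coordinate would, via weak duality (Theorem \ref{th_wgd}) applied to the vertex/optimal pairing, contradict optimality of $\gamma$; more directly, $\varphi(P\bar x, s^*) = w(s^*)^T P\bar x - s^*_q = w^T P\bar x - \gamma = 0$ while $\varphi(y, s^*) \geq 0$ for all $y \in \P$ by weak duality, so $s^*$ lies on the boundary of $\{y^* : \forall y \in \P,\ \varphi(y,y^*)\geq 0\} = \D^*$ in the $e^q$-direction, which is exactly $K$-maximality. That $H^*(P\bar x)$ supports $\D^*$ and contains $s^*$ then follows from $\varphi(P\bar x, \cdot) \geq 0$ on $\D^*$ together with $\varphi(P\bar x, s^*) = 0$.

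Finally the three-way classification of $t^*$ relative to $\D^*$: using \eqref{eq_phiw} write $\varphi(y, t^*) = w(t^*)^T y - t^*_q$, so $\inf_{y \in \P}\varphi(y,t^*) = \inf_{x\in S} w^T P x - t^*_q = \gamma - t^*_q$ (the infimum over $C$-directions contributes nothing since $w \in C^+$). By Theorem \ref{th_fsd}, $t^* \in \D^*$ iff $\varphi(y,t^*) \geq 0$ for all $y \in \P$, i.e. iff $\gamma - t^*_q \geq 0$, i.e. $w^T P\bar x \geq t^*_q$; and $t^* \in \bd\D^*$ in the relevant sense ($t^* \in \Max_K\D^*$) iff $\gamma = t^*_q$. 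This gives $t^* \notin \D^* \iff w^T P\bar x < t^*_q$ and $t^* \in \Max_K\D^* \iff w^T P\bar x = t^*_q$, as claimed.

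The main obstacle I anticipate is the boundedness step for $({\rm P}_1(w))$: one must argue carefully that $t^* \in \Delta$, i.e. $w(t^*) \in (\P^h)^+$, is exactly the condition guaranteeing $w^T P x$ is bounded below on $S$ — this uses that $\P^h$ is the recession cone of $\P$ and that $C \subseteq \P^h$ so $w \in C^+$ makes the scalarization legitimate. The identification $s^* \in \Max_K\D^*$ (rather than merely $s^* \in \D^*$) also requires invoking geometric duality to rule out a larger last coordinate, and care is needed that the edge cases ($t^*$ with $w(t^*)$ on the boundary of $(\P^h)^+$) are still covered by the stated hypothesis $t^* \in \Delta$.
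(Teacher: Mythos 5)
Your proposal is correct and follows essentially the same route as the paper's proof: use $t^*\in\Delta$ to get solvability of $({\rm P}_1(w))$, LP duality to produce $\bar u$ with $b^T\bar u=w^TP\bar x$ so that $s^*=D^*(\bar u,w)\in\D^*$, and weak duality applied to $P\bar x\in\P$ to obtain the supporting-hyperplane property, $K$-maximality of $s^*$, and the final classification of $t^*$. The only cosmetic difference is that you argue boundedness of $({\rm P}_1(w))$ directly from $w\in(\P^h)^+=(\P_\infty)^+$, whereas the paper phrases the same fact via boundedness of the homogeneous LP and feasibility of $({\rm D}_1(w))$.
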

\begin{proof} Since $t^* \in \Delta$, for all $k \in \P^h$, we have $w^T k \geq 0$. This means that the homogeneous variant of the linear program \eqref{p1} (i.e., we set $b=0$ in \eqref{p1}) is bounded (and feasible, as $0$ is feasible). Consequently, the dual program is feasible, even for arbitrary $b$, i.e., \eqref{d1} is feasible. On the other hand, \eqref{p1} is feasible, since we assumed $S \neq \emptyset$. Altogether this implies that both \eqref{p1} and \eqref{d1} have optimal solutions denoted, respectively, by $\bar x$ and $\bar u$. Strong duality implies $w^TP\bar x = b^T \bar u$. Thus, \eqref{eq_sst} holds. We have $s^* \in H^*(P\bar x)$ because this can be written as 
$w(s^*)^T P \bar x = s^*_q$ where we have $w=w(t^*)=w(s^*)$. Together with Theorem \ref{th_wgd}, we obtain that $H^*(P\bar x)$ is a supporting hyperplane of $\D^*$ that contains $s^*$.
The remaining statements are now obvious.
\end{proof}

The following consequence of Proposition \ref{prop_tst} is useful to characterize the condition $t^* \in \Delta$.

\begin{corollary}\label{cor_del}
Let the assumptions of Proposition \ref{prop_tst} be satisfied. Then, $w(y^*) \in (\T_\infty)^+$ for all $y^* \in \T^*$.
\end{corollary}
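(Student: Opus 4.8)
The plan is to reduce the claim to the defining inequalities of $\T^*$ together with a standard recession-direction argument. Recall from \eqref{eq_phiw} that $\varphi(y,y^*)=w(y^*)^T y-y^*_q$; hence, by the definition \eqref{tts} of $\T^*$, a vector $y^*$ lies in $\T^*$ precisely when $w(y^*)^T y\ge y^*_q$ for every $y\in\T$. So I would fix $y^*\in\T^*$ and show that $w(y^*)^T k\ge 0$ for every $k\in\T_\infty$; this is exactly the assertion $w(y^*)\in(\T_\infty)^+$.

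To this end, fix $k\in\T_\infty$ and, using $\T\neq\emptyset$, pick any $y^0\in\T$. By the definition of the recession cone, $y^0+\mu k\in\T$ for all $\mu\ge 0$, so the inequality characterizing membership in $\T^*$ gives $w(y^*)^T(y^0+\mu k)\ge y^*_q$, that is, $\mu\,w(y^*)^T k\ge y^*_q-w(y^*)^T y^0$ for all $\mu\ge 0$. Since the right-hand side is a constant independent of $\mu$, dividing by $\mu>0$ and letting $\mu\to\infty$ forces $w(y^*)^T k\ge 0$. As $k\in\T_\infty$ was arbitrary, $w(y^*)\in(\T_\infty)^+$, and as $y^*\in\T^*$ was arbitrary, the corollary follows.

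There is no genuine obstacle here: the argument is the recession-cone counterpart of the separation step already used in the proof of Proposition \ref{prop_tst}, and the only points requiring a moment's care are the bookkeeping in passing between $\varphi$ and $w(\cdot)$ via \eqref{eq_phiw} and the observation that $\T\neq\emptyset$ is genuinely needed (to produce the base point $y^0$). Note that only the definition \eqref{tts} and nonemptiness of $\T$ are actually invoked; the remaining hypotheses of Proposition \ref{prop_tst} are simply the natural setting in which, later, the condition $t^*\in\Delta$ is to be compared with membership in $\T^*$.
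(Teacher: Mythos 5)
Your proof is correct and follows essentially the same argument as the paper's: one evaluates $\varphi(\cdot,y^*)\geq 0$ along the ray $y^0+\mu k$ with $k\in\T_\infty$ and lets $\mu\to\infty$, the paper merely phrasing this as a proof by contradiction. Your closing remark that only \eqref{tts} and $\T\neq\emptyset$ are really used is also accurate.
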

\begin{proof} Assuming the contrary, there is $\bar y^*\in \T^*$ and $k \in \T_\infty$ with $w(y^*)^T k < 0$. Let $\bar y \in \T$. For sufficiently large $\lambda>0$, using \eqref{eq_phiw}, we obtain $\varphi(\bar y+\lambda k,\bar y^*)< 0$, which contradicts Proposition \ref{prop_tst}.
\end{proof}

The following dual algorithm has the same input and output as Algorithm 1. Similar functions are used. The function {\em dual()} computes a V-representation of an outer approximation $\T^*$ of $\D^*$ from a V-representation of an inner approximation $\T$ of $\P$. In contrast to Algorithm 1, it is not necessary that {\em dual()} returns a minimal V-representation. The recession cone of sets $\T^*$ occurring in the algorithm is known, in fact, we always have $\T^*_\infty=-K=\R_+\cb{-e^q}$. Therefore we denote the return of the function {\em dual()} by $(\T^*_{points},\sim)$ indicating that the second return value (the array containing the extreme directions of $\T^*$) is not used.

The function {\em solve()} returns an optimal solution $x$ of (P$_1(w)$) and an optimal solution $u$ of (D$_1(w)$). Again, only one linear program has to be solved.

\subsubsection*{Algorithm 2.}
 
Input:\\
\indent $B, b, P, Y$ (data of Problem \eqref{p});\\
\indent a solution $(\cb{0},\bar S^h)$ to \eqref{ph};\\
\indent a solution $\bar T^h$ to \eqref{dh};

\noindent Output:\\
\indent $(\bar S,\bar S^h)$ is a solution to \eqref{p};\\
\indent $\bar T$ is a solution to \eqref{d};\\
\indent $(\T_{points},\T_{dirs})$ ... a V-representation of $\P$;\\
\indent $(\T^*_{points},\cb{-e_q})$ ... a V-representation of $\D^*$;

\noindent \\
\indent $\T_{dirs} \leftarrow \cb{Px \st x \in \bar S^h}\cup \cb{y \st y \text{ is a column of } Y}$;\\
\indent $\bar w  \leftarrow \text{ mean} \cb{ w \st (u,w) \in \bar T^h}$;\\
\indent $\bar S \leftarrow \cb{\text{solve(P}_1\text{(}\bar w\text{))}}$;\\
\indent {\bf repeat}\\
\indent\indent $flag \leftarrow 0$;\\
\indent\indent $\bar T \leftarrow  \emptyset$;\\
\indent\indent $\T_{points} \leftarrow \cb{Px \st x \in \bar S}$\\
\indent\indent $(\T^*_{points},\sim) \leftarrow \dual(\T_{points},\T_{dirs})$;\\
\indent\indent {\bf for} $i=1$ {\bf to} $\abs{\T^*_{points}}$ {\bf do}\\
\indent\indent\indent $t^* \leftarrow \T^*_{points}[i]$;\\
\indent\indent\indent $w \leftarrow w(t^*)$;\\
\indent\indent\indent $(x,u)\leftarrow$ solve(P$_1$($w$)/D$_1$($w$));\\
\indent\indent\indent {\bf if} $t^*_q - b^T u > 0$ {\bf then} \\
\indent\indent\indent\indent $\bar S \leftarrow \bar S \cup \cb{x}$;\\
\indent\indent\indent\indent $flag \leftarrow 1$;\\
\indent\indent\indent\indent break; (optional)\\
\indent\indent\indent {\bf else}\\
\indent\indent\indent\indent $\bar T \leftarrow \bar T \cup \cb{(u,w)}$;\\
\indent\indent\indent {\bf end if};\\
\indent\indent {\bf end for};\\
\indent {\bf until $flag=0$};\\
\indent delete points $x \in \bar S$ whenever $Px$ is not a vertex of $\T$ ;\\

\begin{remark}
The last line in the algorithm is easy to realize, for instance, by computing a minimal V-representation using the command
\[(\T_{points},\T_{dirs}) \leftarrow \dual(\T^*_{points},\T^*_{dirs})\]
from Algorithm 1 by standard vertex enumeration methods. Then one has to test if for $x \in \bar S$, $Px$ belongs to $\T_{points}$, if not, $x$ is deleted from $\bar S$. In particular, it is not necessary to solve a linear program. 
\end{remark}

\begin{theorem} \label{corr_fin_2}
 Let $S\neq \emptyset$ and assume that $\P^h$ has a vertex. Then, Algorithm 2 is correct and finite.
\end{theorem}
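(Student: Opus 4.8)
The plan is to transcribe the proof of Theorem~\ref{corr_fin_1} to the dual side: Algorithm~2 maintains an inner polyhedral approximation $\T\subseteq\P$, equivalently---via \eqref{tts}---an outer approximation $\T^*=\cb{y^*\in\R^q\st\varphi(y,y^*)\geq0,\ y\in\T}\supseteq\D^*$, and refines $\T^*$ by one cutting hyperplane per processed vertex until $\T^*=\D^*$. Two invariants are used. First, $\cone\T_{dirs}=\P^h=\P_\infty$: since $(\cb{0},\bar S^h)$ solves \eqref{ph} we have $\cone P[\bar S^h]+C=\P^h$, the columns of $Y$ generate $C$, and $\P^h=\P_\infty$ as \eqref{p} is feasible; with $\bar S\subseteq S$ this gives $\T=\conv P[\bar S]+\P_\infty\subseteq\P$, hence $\T^*\supseteq\D^*$ by the strong-duality identity $\D^*=\cb{y^*\st\forall y\in\P:\varphi(y,y^*)\geq0}$ (Theorems~\ref{th_wgd}, \ref{th_fsd}). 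Second, $\T^*_\infty=-K=\D^*_\infty$: the first equality was noted before the algorithm, the second follows from $-K\subseteq\D^*_\infty$ and $\D^*\subseteq\T^*$; moreover $\P^h$ is pointed (it has a vertex), so $\P\subsetneq\R^q$ and thus $\T\subsetneq\R^q$.

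Next I would check initialisation and the per-vertex step. For $(u,w)\in\bar T^h$ one has $c^Tw=1$ and $Y^Tw\geq0$ (so $w\in C^+$), and since $u$ satisfies $B^Tu=P^Tw$, $u\geq0$, also $w^TPk\geq0$ for all $k\in S^h$; hence $w\in(\P^h)^+$, and as $(\P^h)^+$ is a convex cone the mean $\bar w$ lies in it, so, as in the proof of Proposition~\ref{prop_d1}, $({\rm P}_1(\bar w))$ has an optimal solution and $\bar S$ is legitimately initialised. For each $t^*\in\T^*_{points}$, Corollary~\ref{cor_del} (applicable since $\T$ is a non-empty polyhedron with $\T_\infty=\P^h\supseteq C$ and $\T\subsetneq\R^q$) gives $w(t^*)\in(\T_\infty)^+=(\P^h)^+$, i.e.\ $t^*\in\Delta$. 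Thus Proposition~\ref{prop_d1} applies with $w=w(t^*)$: $({\rm P}_1(w))$ has a solution $\bar x$, $H^*(P\bar x)$ supports $\D^*$ in $s^*:=(t^*_1,\dots,t^*_{q-1},w^TP\bar x)\in\Max_K\D^*$, and strong duality gives $b^Tu=w^TP\bar x$, so the test ``$t^*_q-b^Tu>0$'' is precisely ``$t^*\notin\D^*$'' while ``$t^*_q-b^Tu=0$'' reads ``$t^*\in\Max_K\D^*$''.

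For correctness at termination: when $flag=0$, every $t^*\in\T^*_{points}$---hence every vertex of $\T^*$---lies in $\Max_K\D^*\subseteq\D^*$; as $\T^*$ is line-free with $\T^*_\infty=-K=\D^*_\infty$, it is the convex hull of its vertices plus $-K$ and therefore contained in $\D^*$, so $\T^*=\D^*$. Proposition~\ref{prop_tst} then gives $\T=\cb{y\st\varphi(y,y^*)\geq0,\ y^*\in\D^*}$, which equals $\P$ by Theorem~\ref{th_fsd}; hence $(\T_{points},\T_{dirs})$ and $(\T^*_{points},\cb{-e^q})$ are the asserted V-representations of $\P$ and $\D^*$. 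From $\T=\P$ one reads that $(\bar S,\bar S^h)$ is an infimizer, and since $\P$ is line-free every vertex of $\P$ occurs among $P[\bar S]$; hence the final deletion step still leaves an infimizer, all of whose points map to vertices of $\P$ and thus (Proposition~\ref{prop_2}) to minimizers, so that $(\bar S,\bar S^h)$---whose direction part $\bar S^h$ consists of minimizers by hypothesis---is a solution to \eqref{p}. Dually, each $(u,w)$ added to $\bar T$ lies in $T$ (it satisfies $B^Tu=P^Tw$, $u\geq0$, $c^Tw=1$, $Y^Tw\geq0$) and has $D^*(u,w)=(w_1,\dots,w_{q-1},b^Tu)=t^*\in\Max_K\D^*$; at termination these $t^*$ run over all vertices of $\D^*$, so $\bar T$ is a finite supremizer consisting of maximizers, i.e.\ a solution to \eqref{d}.

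I expect finiteness to be the main obstacle. The topological argument of the primal proof (``$s^k\in\Int\T^{k-1}$'', using $c\in\Int C\subseteq\Int\P_\infty$) is unavailable, since $\T^*$ has the one-dimensional recession cone $-K$. Instead: when a point $\bar x$ is added at some refinement step, the boundary point $s^*$ lies on the proper face $G:=H^*(P\bar x)\cap\D^*$ of $\D^*$, and the halfspace $\cb{y^*\st\varphi(P\bar x,y^*)\geq0}$ contains every later $\T^*$ (because $P\bar x$ is then a point of $\T$). If at a later step the analogous point $\tilde s^*=\tilde t^*-\tilde z\,e^q$ ($\tilde z>0$, $\tilde t^*$ a vertex of the then-current $\T^*$) also lay on $G\subseteq H^*(P\bar x)$, then $\varphi(P\bar x,\tilde s^*)=0$, and by the identity $\varphi(y,y^*-\mu e^q)=\varphi(y,y^*)+\mu$ (from \eqref{eq_phiw}) we would get $\varphi(P\bar x,\tilde t^*)=-\tilde z<0$, which is impossible as $\tilde t^*$ lies in that later $\T^*$ and hence in the halfspace above. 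So distinct refinement steps expose distinct proper faces of $\D^*$---just as distinct steps of Algorithm~1 expose distinct faces of $\P$---and since $\D^*$ is polyhedral with finitely many faces, the algorithm terminates after finitely many steps. Everything else is a routine translation of the proof of Theorem~\ref{corr_fin_1}.
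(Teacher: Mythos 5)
Your proof is correct and follows essentially the same route as the paper's: the same invariants ($\T\subseteq\P$, $\T^*\supseteq\D^*$, $\T^*_\infty=-K$), the same use of Corollary \ref{cor_del} and Proposition \ref{prop_d1} per vertex, the same termination argument via Theorem \ref{th_fsd} and the final deletion step, and finiteness via distinct exposed faces of $\D^*$. The only notable difference is that where the paper justifies the key finiteness claim by citing an external lemma (that the exposed face $F^*$ lies in $\Max_K\T^{*k}$, \cite[Lemma 4.48]{Loehne11book}), you replace it with a short self-contained computation using $\varphi(y,y^*-\mu e^q)=\varphi(y,y^*)+\mu$, which is a fine and arguably cleaner substitute.
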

\begin{proof} By similar arguments as in the proof of Theorem \ref{corr_fin_1} one can show that P$_1$($\bar w$)) has a solution.

The set $\T:=\conv \T_{points} + \cone \T_{dirs}$ is a subset of $\P$. Hence, by  Theorem \ref{th_wgd}, after calling the function {\em dual()}, $\T^*:=\conv \T^*_{points} + \cone \T^*_{dirs}$ is a superset of $\D^*$. Since $\T \neq \R^q$, $c \in \Int C$ and $\T_\infty \supseteq C$, we have $\cone \T^*_\infty = \R_+ \cdot \cb{-e^q}$, i.e., we can set $\T^*_{dirs}=\cb{-e^q}$ and we know that $\T^*_{points} \neq \emptyset$.

The array $(\cb{0},\T_{dirs})$ provides a V-representation of $\P^h$, i.e., $\T_\infty=\P^h$. Corollary \ref{cor_del} yields that $\T^*_{points} \subset \Delta$. Hence, by Proposition \ref{prop_d1}, solutions to (P$_1$($w$)) and (D$_1$($w$)) exist. It can be easily shown that the vectors $(u,w) \in \bar T$ are maximizers of \eqref{d}, see also \cite[Lemma 4.51]{Loehne11book}.  

The algorithm terminates, if $\T^*_{points} \subseteq \D^*$. Since $\D^*_\infty=\T^*_\infty=\R_+\cb{-e^q}$, we conclude $\D^*=\T^*$, i.e., $\bar T$ is a supremizer of \eqref{d} and $(\T^*_{points},\cb{-e^q})$ is a V-representation of $\D^*$. Since at termination $\T^*=\D^*$, Theorem \ref{th_fsd} implies $\T=\P$. Thus $(\T_{points},\T_{dir})$ is a (not necessarily minimal) V-representation of $\P$.
A solution $x$ to (P$_1$($w$)) is in general not a minimizer for $\eqref{p}$ (but only ``weakly efficient'', compare e.g. \cite[Theorem 4.1]{Loehne11book}). Therefore, in the last line of the algorithm, $x$ is deleted from $\bar S$, whenever $Px$ is not a vertex of $\T$. According to Proposition \ref{prop_2}, the remaining set $\bar S$ consists of only minimizers. It is non-empty because, by assumption, $\P^h$ has a vertex and hence $\T=\P$ must have a vertex. As non-vertex points are redundant in a V-representation of a set which has a vertex, the property of $\bar S$ being an infimizer for \eqref{p} is maintained by deleting the non-minimizers in $\bar S$.

Finally we show that the algorithm terminates after finitely many steps. We consider the `repeat' loop in iteration $k$. We set $w=w^k$ and $t^*={t^{*k}}$ and denote the solutions of (P$_1$($w^k$)) and (D$_1$($w^k$)) by $x^k$ and $u^k$, respectively. The point $s^{*k}:=t^{*k} + z^{*k} \cdot \cb{-e^q}$, where $z^{*k}:=(t^{*k}_q - b^T u^k)$ belongs to $\T^{*(k-1)}\setminus \Max_K \T^{*(k-1)}$ whenever $z^{*k}>0$. We have $ \T^{*k} :=  \T^{*(k-1)} \cap \{ y^* \in \R^q \st \varphi(Px^k,y^*)\geq 0\}$ and by Proposition \ref{prop_d1} we know that $F^*:=\{ y^* \in \D^* \,\st\varphi\ofg{Px^k,y^*}= 0\}$ is a face of $\D^*$ with $s^{*k} \in  F^*$. Likewise to \cite[Lemma 4.48]{Loehne11book}, we see that $F^*\subseteq \Max_K \T^{*k}$. This means for the next iteration that $s^{*(k+1)} \not \in F^*$ (because $s^{*(k+1)} \in \T^{*k}\setminus\Max_K \T^{*k}$), and therefore $s^{*(k+1)}$ belongs to another face of $\D^*$. Since $\D^*$ is polyhedral, it has a finite number of faces, hence the algorithm is finite.
\end{proof}

Let us summarize the two-phase method for solving unbounded problems. We consider an arbitrary linear vector optimization problem, where we only assume that $C$ is a solid pointed polyhedral cone. We fix some $c$ according to \eqref{ass_c}, which is always possible in the way described after \eqref{ass_c}. In phase 1, we first try to compute some $\eta \in \Int(\D^* + K)$ with $\eta^T c = 1$. This can be realized by Algorithm 3 in \cite[Section 5.5]{Loehne11book}, where the set $T$ has to be adapted to the more general setting of this article. Note that $c$ has a different meaning in \cite[Section 5.5]{Loehne11book}. The first LP solved by the mentioned algorithm is
\[ \min 0^T w + 0^T u \quad\text{ s.t. }\quad (u,w) \in T.\] 
If this linear program is infeasible, then \eqref{d} is infeasible. Otherwise one obtains either some $\eta \in \Int(\D^* +K)$ or the algorithm indicates that $\Int(\D^* +K)$ is empty. In the latter case, we know that $\P^h$ has no vertex. This means that $\P$, if non-empty, contains a line. This case has not been treated so far. Since $c_q=1$ according to \eqref{ass_c}, the condition $\eta^T c = 1$ can be always realized by an appropriate choice of $\eta_q$. 

Next, we solve \eqref{p_eta} by either Algorithm 1 or Algorithm 2. Since \eqref{p_eta} is bounded, a solution of the primal and dual homogeneous problem of \eqref{p_eta} can be easily obtained. However, this is not necessary as the $u$-components of $(u,w)\in \bar T^h$ are not used in Algorithms 1 and 2. Therefore we can use  
\[ \bar S^h = \emptyset \qquad \text{and}\qquad\bar T^h = \cb{\of{0,\frac{z}{z^T c}} \bigg|\; z \text{ is a column of } Z }\]
as an input of Algorithm 1 or 2 to solve \eqref{p_eta}, compare also \cite[Theorem 5.20]{Loehne11book}. Let $(\bar S_\eta,\bar S^h_{\eta})$ be a solution of $\eqref{p_eta}$ and let $\bar T_\eta$ be a solution of the dual problem of \eqref{p_eta}. Then, a solution of \eqref{ph} is obtained by setting
\[ (\bar S,\bar S^h) := (\cb{0},\bar S_\eta\smz),\]
compare \cite[Theorem 5.23]{Loehne11book}. A solution $\bar T^h$ of $\eqref{dh}$ can be obtained from $\bar\T_{\eta}$ but again only the $w$-components are required by Algorithms 1 or 2 in phase 2.
 As a consequence of \cite[Theorem 5.25]{Loehne11book}, we can use 
\[ \bar T^h:=\cb{\of{0,w(y^*)}\st y^* \in \T^*_{points},\; y^*_q=0}\]
as an input of Algorithm 1 or 2 in the second phase, where $\T^*_{points}$ is the result from the first phase, i.e., $(\T^*_{points},\cb{-e^q})$ is a V-representation of the lower image $\D^{*\eta}$ of the dual problem of \eqref{p_eta}. In the second phase, the first LP to be solved in Algorithm 1 is \eqref{d1}. If \eqref{d1} turns out to be unbounded, we know that \eqref{p} is infeasible. Likewise, if the first LP in Algorithm 2, namely (P$_1$($\bar w$)), is infeasible, we know that \eqref{p} is infeasible. Otherwise, according to Theorems \ref{corr_fin_1} and \ref{corr_fin_2}, solutions of \eqref{p} and \eqref{d} are computed. 

\begin{remark}\label{rem_eps}
In practice the condition $z>0$ in Algorithm 1 is replaced by $z>\varepsilon$ for some $\varepsilon>0$. Assume that the results of the first phase are always exact. Then, in the second phase, Algorithm 1 yields an $\varepsilon$-solution of \eqref{p} in the sense that in Definition \ref{def_sol} the finite infimizer is replaced by a finite $\varepsilon$-infimizer, i.e., condition \eqref{eq_inf_att} is replaced by
\begin{equation}\label{eq_inf_att_eps}
\conv P[\bar S]  + \cone P[\bar S^h] + C -\varepsilon\cb{c} \supseteq P[S] + C.
\end{equation}
Taking into account that (using the assumption $c_q=1$ in \eqref{ass_c})
\[ \varphi(y-\varepsilon c,y^*)=\varphi(y,y^*+\varepsilon e^q),\]
we see that Algorithm 1 yields an $\varepsilon$-solution of \eqref{d} in the sense that in Definition \ref{def_sold} a finite supremizer is replaced by a finite $\varepsilon$-supremizer, i.e., condition \eqref{eq_sup_att} is replaced by 
\begin{equation}\label{eq_sup_att_eps}
 \conv D^*[\bar T] - K +\varepsilon\cb{e^q} \supseteq D^*[T] -K.
\end{equation}
Likewise, in Algorithm 2 the condition $t^*_q - b^T u > 0$ is replaced by $t^*_q - b^T u > \varepsilon$ for some $\varepsilon > 0$. Consequently, Algorithm 2 yields an $\varepsilon$-solution of \eqref{d}. It also yields an
$\varepsilon$-infimizer of \eqref{p}, but in general not an $\varepsilon$-solution of \eqref{p}. The reason is that the last line in Algorithm 2 only works for the exact algorithm.

Note further that an $\varepsilon$-solution $(\bar S,\bar S^h)$ of \eqref{p} refers to an inner and an outer approximation of the upper image $\P$ in the sense that 
\[ \conv P[\bar S]  + \cone P[\bar S^h] + C \subseteq \P \subseteq \conv P[\bar S]  + \cone P[\bar S^h] + C -\varepsilon\cb{c}.\]
Likewise, an $\varepsilon$-solution $\bar T$ of \eqref{d} refers to an inner and an outer approximation of the lower image $\D^*$ in the sense that 
\[ \conv D^*[\bar T] - K \subseteq \D^* \subseteq \conv D^*[\bar T] - K +\varepsilon\cb{e^q}.\]
Note that the approximation error of the classical variant of Benson's algorithm and its dual variant has been studied in \cite{ShaEhr08,ShaEhr08-1}.
\end{remark}

\section{Computation of polyhedral set-valued risk measures}

Set-valued risk measures evaluate the risk of multi-variate random portfolios $X \colon \Omega \to \R^d$ the components $X_i\of{\omega}$ of which represent the number of units of the $i$-th asset in the portfolio, $i = 1, \ldots, d$. If transaction costs are present, such risk measures are more appropriate than real-valued functions, which always represent a complete risk preference and thus cannot account for incomparable portfolios.

The theory of set-valued risk measures was initiated in \cite{JMT04} and systematically developed in \cite{HamHey10} and \cite{HamHeyRud11}. We refer the reader to these references for further motivation and information. Here, we restrict ourself to the case of finite probability spaces and the question how the values of a set-valued risk measure can be computed. It will turn out that this leads to problems of type (P), hence one can apply the algorithm presented in Section~\ref{sec_Bensonalgo}.

The basic idea is as follows. The value of a set-valued risk measure at some random future portfolio $X$ consists of initial deterministic portfolios $u$ which can be given as deposits for the `risky payoff' $X$, thus making the overall position `risky payoff plus deposit' a non-risky one. It usually is not possible to use all assets as deposits, but rather a small subset including cash in a few currencies, bonds, gold or similar risk-free or low-risk assets. These `eligible' assets are assumed to span the linear subspace $M$ of $\R^d$ with $1 \leq \dim M = m \leq d$. A typical example, already used in \cite{JMT04}, is $\R^m \times \cb{0}^{d-m}$, i.e. the first $m$ assets are eligible. 

Let $\of{\Omega, P}$ be a finite probability space and $N \geq 2$ be the number of elements in $\Omega = \cb{\omega_1, \ldots, \omega_N}$. We assume $p_n = P\of{\cb{\omega_n}} > 0$ for all $n \in \cb{1, \ldots, N}$. The space of all multi-variate random variables $X \colon \Omega \to \R^d$ is denoted by $L^0_d\of{\Omega, P}$.  A random variable  $X \in L^0_d\of{\Omega, P}$ can be identified with an element $\hat x \in \R^{dN}$ through
\[
\hat{x} = \of{X_1\of{\omega_1}, \ldots, X_d\of{\omega_1}, X_1\of{\omega_2}, \ldots, X_d\of{\omega_N}}^T \in \R^{dN}
\]
and vice versa. Thus, the function $T \colon L^0_d\of{\Omega, P} \to \R^{dN}$ defined by $TX = \hat x$ is a linear bijection. If $A \subseteq L^0_d\of{\Omega, P}$ then we set $\hat A = \cb{\hat x \in \R^{dN} \mid \hat x = TX, \; X \in A}$.

Let $K_0 \subseteq \R^d$ be a finitely generated convex cone satisfying $\R^d_+ \subseteq K_0 \neq \R^d$. Such a `solvency' cone models the market at initial time. We set $K_0 ^M = K_0  \cap M$ and $\mathcal P\of{M, K_0^M} = \cb{D \subseteq M \mid D = D + K_0^M}$. A risk measure is a function $R \colon L^0_d\of{\Omega, P} \to \mathcal P\of{M, K_0^M}$ satisfying
\begin{equation}
\label{EqMTranslative}
\forall u \in M, \; \forall X \in L^0_d\of{\Omega, P} \colon R\of{X + u\One} = R\of{X} - u
\end{equation}
where $\One \colon \Omega \to \R$ with $\One\of{\omega} = 1$ for all $\omega \in \Omega$ is the uni-variate random variable with constant value 1. 

With $R$, we associate a risk measure $\hat R \colon \R^{dN} \to \mathcal P\of{M, K_0^M}$ by means of $\hat R\of{\hat x} = R\of{X}$ for $\hat x = TX$. Consequently, $\hat R$ satisfies
\begin{equation}
\label{EqMTranslativeDiscrete}
\forall u \in M, \; \forall \hat x \in \R^{dN} \colon \hat R\of{\hat x + \hat I_d u} =\hat R\of{\hat x} - u
\end{equation}
where
\[
\hat{I}_d = \left(
          \begin{array}{c}
           I_d \\
           \vdots  \\
           I_d
           \end{array}
           \right) \in \R^{dN \times d} \quad \text{and} \quad
I_d =  \left(
        \begin{array}{cccc}
          1 & 0 & \ldots & 0 \\
          0 & 1 & \ldots & 0 \\
           \vdots & &  \ddots & \vdots \\
           0 & \ldots & 0 & 1 \\
        \end{array}
      \right) \in \R^{d \times d}.
\]

The most common way to generate a risk measure is by means of a set $A \subseteq L^0_d\of{\Omega, P}$ of random variables which are considered to be `acceptable' by the decision maker. The value of a risk measure generated by $A$ then consists of all deterministic (available at time $0$) portfolios $u \in M \subseteq \R^d$ which, when added to the uncertain future position $X$, make the overall position acceptable. Thus,
\[
R_A\of{X} = \cb{u \in M \mid X + u\One \in A}.
\]
This functions indeed satisfies \eqref{EqMTranslative}. Correspondingly, 
\[
R_{\hat A}\of{\hat x} = \cb{u \in M \mid \hat x + \hat I_d u \in \hat A}
\]
satisfies \eqref{EqMTranslativeDiscrete}, and we have $R_{\hat A} = \widehat{R_A}$. Vice versa, with risk measures $R \colon L^0_d\of{\Omega, P} \to \mathcal P\of{M, K_0^M}$  and $\hat R \colon \R^{dN} \to \mathcal P\of{M, K_0^M}$ we associate the sets
\[
A_R = \cb{X \in L^0_d\of{\Omega, P} \mid 0 \in R\of{X}} \; \text{and} \;
	A_{\hat R} = \cb{\hat x \in \R^{dN} \mid 0 \in \hat R\of{\hat x}},
\]
respectively. A basic fact about risk measures is a one-to-one correspondence between closed acceptance sets $A \subseteq L^0_d\of{\Omega, P}$ which satisfy $A + K_0^M\One \subseteq A$ and risk measures $R \colon L^0_d\of{\Omega, P} \to \mathcal P\of{M, K_0^M}$ with a closed graph by means of the above formulas. In particular, the relationships $\hat A  = A _{R_{\hat A}}$ and $\hat R = R_{A_{\hat R}}$ hold true. See \cite{HamHeyRud11} for further details.

A risk measure $R$ is called polyhedral if the associated risk measure $\hat R$ is polyhedral, i.e., if
\[
\gr \hat R = \cb{\of{\hat x, u} \in \R^{dN} \times M \mid u \in \hat R\of{\hat x}}
\]
is a polyhedral subset of $\R^{dN} \times \R^d$. The one-to-one correspondence between risk measures and their acceptance sets extends to the polyhedral case: $\hat A \subseteq \R^{dN}$ is polyhedral if, and only if, $R_{\hat A}$ is polyhedral, and $\hat R$ is polyhedral if and only if $\hat A_R$ is polyhedral.

The above discussion leads to the following conclusion.

\begin{remark}
\label{RemPRMvsBenson} Since each polyhedral risk measure $\hat R$ has the representation
\[
\hat R\of{\hat x} =  \cb{u \in M \mid \hat x + \hat I_d u \in A_{\hat R}}
\]
where $A_{\hat R}$ is a polyhedral set, the set $\hat R\of{\hat x}$ is the upper image of a linear vector optimization problem. Indeed, if $A_{\hat R} \subseteq \R^{dN}$ has the H-representation $A_{\hat R} = \cb{\hat y \in \R^{dN} \mid \hat B \hat y \geq \hat b}$ where $\hat B$ and $\hat b$ are matrices of appropriate dimension then
\[
\hat R\of{\hat x} =  \cb{u \in M \mid \hat B\of{\hat x + \hat I_d u} \geq \hat b} = \cb{u \in M \mid \hat B \hat I_d u \geq \hat b - \hat B\hat x}.
\]
Let $P \in \R^{d \times m}$ be a matrix with column vectors $\mu^1, \ldots, \mu^m$ forming a basis of $M$ and define $B = \hat B \hat I_dP$, $b = \hat b - \hat B\hat x$. Then, observing that $\hat R\of{\hat x} + K_0^M = \hat R\of{\hat x}$ and substituting $u = Pz$ we obtain that $\hat R\of{\hat x}$ is the upper image of the problem 
\[
 \text{ minimize } P \colon \R^m \to M \text{ with respect to } \le_{K_0^M} \text{ subject to } Bz \geq b.
\]
\end{remark}

However, this is just a theoretical result since in practice life is not as straightforward: Usually, the constraints describing $\hat A$ involve a large number of auxiliary variables, and $u$ is given as a linear function of those (see Example~\ref{exAVARtheory} below). Therefore, the algorithm presented in Section~\ref{sec_Bensonalgo} is an appropriate tool to compute the values of a polyhedral set-valued risk measure because the dimension of the pre-image space usually is much greater than the dimension of the image space which is $\dim M = m \leq d$. Compare Examples~\ref{ex_WCtheory} and \ref{exAVARtheory} below.

In the following, we will discuss two examples which will be used for the numerical computations reported in Section~\ref{section_numeric}.

\begin{example}
\label{ex_WCtheory}
In worst case, the regulator/decision maker only accepts positions with non-negative components. Thus, the acceptance set  is $A = \of{L^0_d}_+$ which is the set of all component-wise non-negative random variables. The market extension of the worst case risk measure, i.e. when trading is allowed, is related to the set of superhedging portfolios, see \cite{LoeRud11}. Its acceptance set in a one-period market is $A = \of{L^0_d}_+ +K_0\One+L^0_d\of{K_T}$ where the cone $K_0$ and the random cone $K_T$ model market conditions with a potential bid ask price spread at initial and terminal time, respectively, and $L^0_d\of{K_T} = \cb{X \in L^0_d \mid \forall \omega \in \Omega \colon X\of{\omega} \in K_T\of{\omega}}$.  The cones $K_T\of{\omega}$ are also finitely generated convex cones satisfying $\R^d_+ \subseteq K_T\of{\omega} \neq \R^d$ for all $\omega \in \Omega$.

The market extension of the worst case risk measure still is very conservative since a payoff $X$ is acceptable only if there is a trading strategy such that its result, when added to $X$, is non-negative in all components in all possible scenarios, even those with a very small probability. Therefore, we introduce a `relaxed' variant  as follows. 
 
Let $G \subseteq \R^d$ be a finitely generated convex cone with $\R^d_+ \subseteq G \neq \R^d$ and consider the following acceptance set
\[
A^{RWC} = \cb{X \in L^0_d\of{\Omega, P} \mid \forall \omega \in \Omega \colon X\of{\omega}  \in \of{-\epsilon + \R^d_+} \cap G}+K_0\One+L^0_d\of{K_T}
\]
where $\epsilon \in \R^d$ such that $\epsilon_i \geq 0$ for all $i \in \cb{1, \ldots, d}$. Compared to the `true' worst case risk measure, the set $\of{L^0_d}_+$ is replaced by `a little' bigger set. 

Thus, payoffs with `small' negative components may still be considered acceptable, and the size of the risk related to such payoffs is controlled by $\epsilon$ and $G$. The cone $G$ may serve as a conservative estimate of a market model which the regulator/supervisor thinks is robust enough to cover most market scenarios. For example, $G$ can be chosen such that $P(G\subseteq K_T)\geq 1-\alpha$ for some significance level $\alpha\in[0,1]$. Then, the probability of a loss is bounded by $\alpha$, and a potential loss (in physical units) is bounded by $\epsilon$. Note that in the scalar case $d=m=1$, the relaxed worst case risk measure reduces to the scalar worst case risk measure.
The market extension of the relaxed worst case risk measure is given by 
\begin{align}
\label{WCMar}
 RWC\of{X} 
\nonumber  = & \big\{u \in M \mid  z \in K_0, \; Z \in L^0_d\of{K_T}, \; \forall \omega \in \Omega \colon
\\           & \;\;X\of{\omega} -z - Z\of{\omega} + u \in \of{-\epsilon + \R^d_+} \cap G\big\}
\end{align}
and can be seen as a relaxation of the superhedging set and thus as a good deal price bound of $-X$, see Example~\ref{ex45} for details. 
It is polyhedral (convex) as $A^{RWC}$ is polyhedral (convex), but not sublinear. This is a new feature since the classical worst case risk measure is always sublinear.

In order to describe $RWC\of{\hat x}$, let $g^1, \ldots, g^L$ be the generating vectors of the cone $G$ and let $\hat G$ be the $dN \times LN$ matrix which contains $N$ blocks on its diagonal, where each block consists of the matrix with $g^1, \ldots, g^L$ as columns. Then
\[
\forall n \in \cb{1, \ldots, N} \colon X\of{\omega_n} \in G \quad \Leftrightarrow \quad \exists \gamma \in \R^{LN}_+ \colon \hat x =  \hat G \gamma.
\]
Similarly, let $\hat K_T$ be the $dN \times J$ matrix which contains $N$ blocks on its diagonal, where the first block consists of the matrix with the generating vectors of $K_T(\omega_1)$ as columns, the second block contains the generating vectors of $K_T(\omega_2)$ and so forth to the last block with  the generating vectors of $K_T(\omega_N)$. $J$ is the sum of the number of generating vectors of all $K_T$'s. Let $\hat K_0$ denote the matrix containing the $I$ generating vectors of $K_0$ as columns. Then,
\begin{align*}
RWC\of{\hat x} = \{&Pz \mid \hat x + \hat I_d\of{Pz + \epsilon-\hat K_0r}- \hat K_Ts\in \R^{dN}_+,\\
   & \hat I_d\of{ Pz-\hat K_0r}- \hat K_Ts - \hat G \gamma = -\hat x, 
 z \in \R^m, \; \gamma \in \R^{LN}_+, \; r\in \R^{I}_+, \; s \in \R^{J}_+\},
\end{align*}
with ordering cone $K_0^M$. Thus, the dimension of the pre-image space is $m + LN+I+J$ whereas the dimension of the image space is just $m$.
\end{example}

\begin{example} 
\label{exAVARtheory}
The following set-valued function is a generalization of the scalar average value at risk (see  \cite[p. 210]{FS11}) which is probably the most important and most studied example of a sublinear coherent measure of risk as introduced by \cite{ADEH99}. Let $\alpha \in (0,1]^d$. Define for $X \in L^0_d$
\begin{align}
\label{AVaRMar}
 AV@R_\alpha\of{X} 
\nonumber  = &\big\{\diag\of{\alpha}^{-1}E\sqb{Z} - z \mid
    Z \in \of{L^0_d}_+, \\
     &\;\; X + Z - z\One \in K_0\One + L^0_d\of{K_T}, \; z \in \R^d\big\} \cap M
\end{align}
where $\diag\of{\alpha}^{-1}$ is the inverse of the diagonal matrix with the components of $\alpha$ on its main diagonal and zero elsewhere, and the cones $K_0, K_T\of{\omega}$ modeling the market conditions are as described above. Therefore this risk measure is also called the market extension of a simpler `regulator' version, see \cite{HamelRudloffYankova12}. We also refer to this paper for further motivation, interpretation and more details. It is immediately clear that $AV@R_\alpha$ is not given in the form of $R_A$ above, but it is a polyhedral convex (even sublinear) risk measure. 

Its `hat' variant can be derived as follows. Replace $Z$ by $\hat z \in \R^{dN}$ and introduce auxiliary variables which admit to write $\hat x + \hat z - \hat I_d z$ as non-negative linear combination of the generating vectors of the cones $K_0$ and $K_T\of{\omega_n}$.  
Transform the objective into matrix form and get
\[
AV@R_\alpha\of{\hat x} = \cb{Px \mid Bx \geq b} + K_0^M,
\]
with appropriate matrices $B$, $b$ and $P$ where $K_0^M = K_0 \cap M$ as before. The dimension of the pre-image space is $d(N+1)+I+J$ whereas the dimension of the image space is just $m$. It is worth mentioning that in the scalar case (i.e., without transaction costs) Rockafellar and Uryasev observed that the AV@R can be computed by solving a linear optimization problem, see \cite{RockafellarUryasev00}. 
\end{example}

\section{Numerical examples}
\label{section_numeric}

The algorithms have been implemented with MATLAB using the GNU Linear Programming Kit (GLPK) to solve the LPs and the CDDLIB package \cite{BreFukMar98} for vertex enumeration. The graphics have been generated by JavaView\footnote{by Konrad Polthier, http://www.javaview.de} and OpenOffice (Figure \ref{fig_5}). By a straightforward extension of the above results we can also solve linear vector optimization problems with constraints of the form
\begin{equation}\label{eq_constr}
 a \leq Bx \leq b \qquad lb\leq x \leq ub,
\end{equation}
where the components of $a,b,lb,ub$ belong to $\R \cup\cb{-\infty,+\infty}$.
All examples were computed on a MacBook Pro with 2.26 GHz clock and 8 GB memory.  We made use of the fact that all the LPs have a very similar form. This means that the matrix $B$ does not need to be changed during the algorithm (except one line because $\eta$ is not yet known at the beginning). This allows us to initialize LPs by appropriate basis solution of LPs solved in previous steps (warm starts). 

In the following examples we provide tables with a few computational data, such as the total time and the number of LPs solved (\# LPs). Note that we compute an $\varepsilon$-solution $(\bar S,\bar S^h)$ of \eqref{p} and an  $\varepsilon$-solution $\bar T$ of \eqref{d}, compare Remark \ref{rem_eps}. We provide the cardinality $|\,.\,|$ of the sets $\bar S$, $\bar S^h$ and $\bar T$. Recall that we have $|S^h|=0$ whenever the problem is bounded. 
Note that $|\bar S|$ and $|\bar T|$ `correlate' to the number of, respectively, vertices and facets of $\P$ (but do not need to coincide exactly). One reason for possible differences is degeneracy as discussed in \cite[Section 5.6]{Loehne11book}, another one is numerical inaccuracy.

Further we denote by $t_{max}$ the maximum time used to solve one LP and by $t_{aver}$ the average time to solve one LP. The quotient $t_{max}/t_{aver}$ indicates the impact of using warm starts. 
We start with two numerical examples from the literature. 

\begin{figure}[ht]
\begin{center}
\includegraphics[scale=0.58]{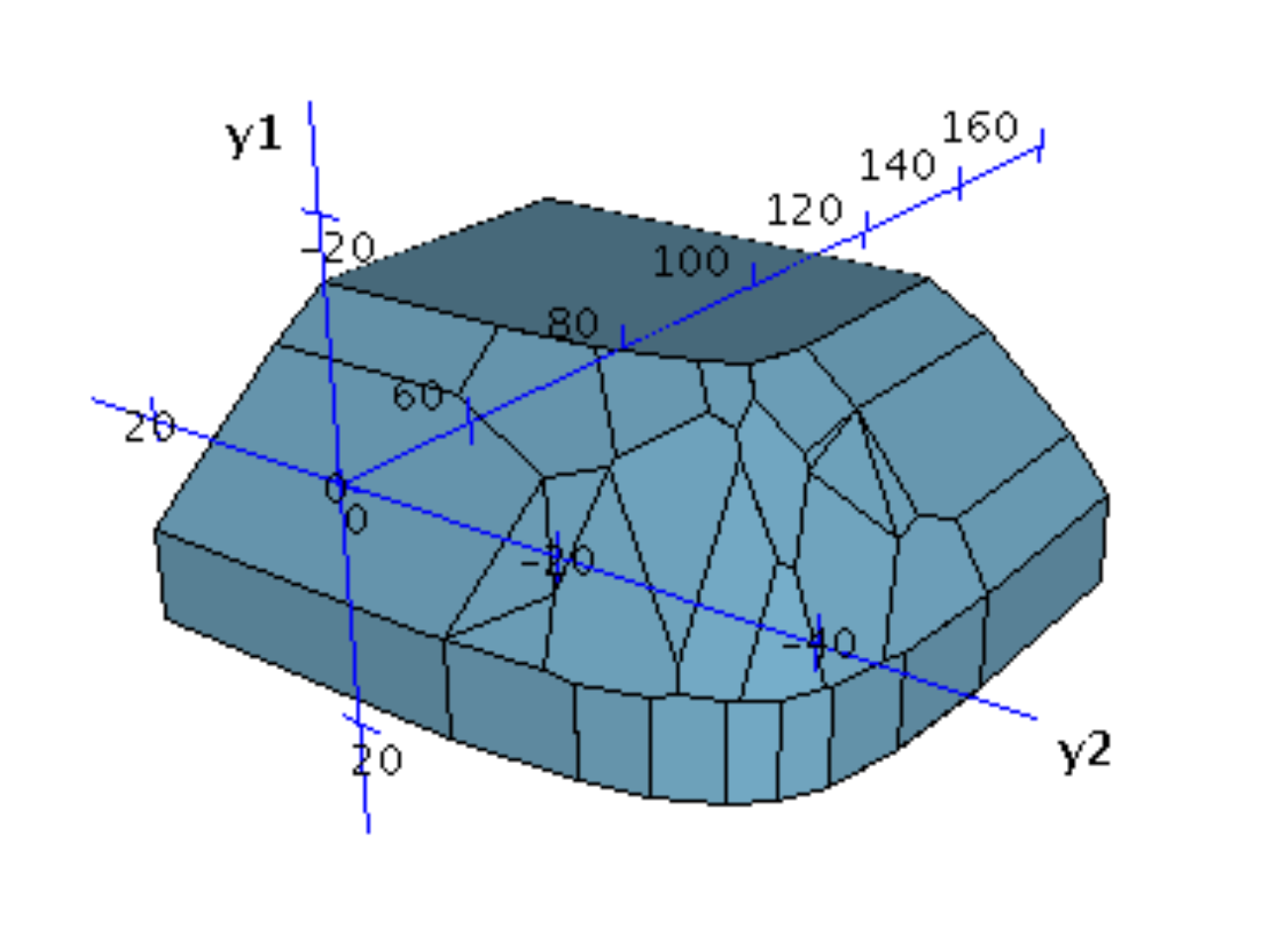}
\includegraphics[scale=0.58]{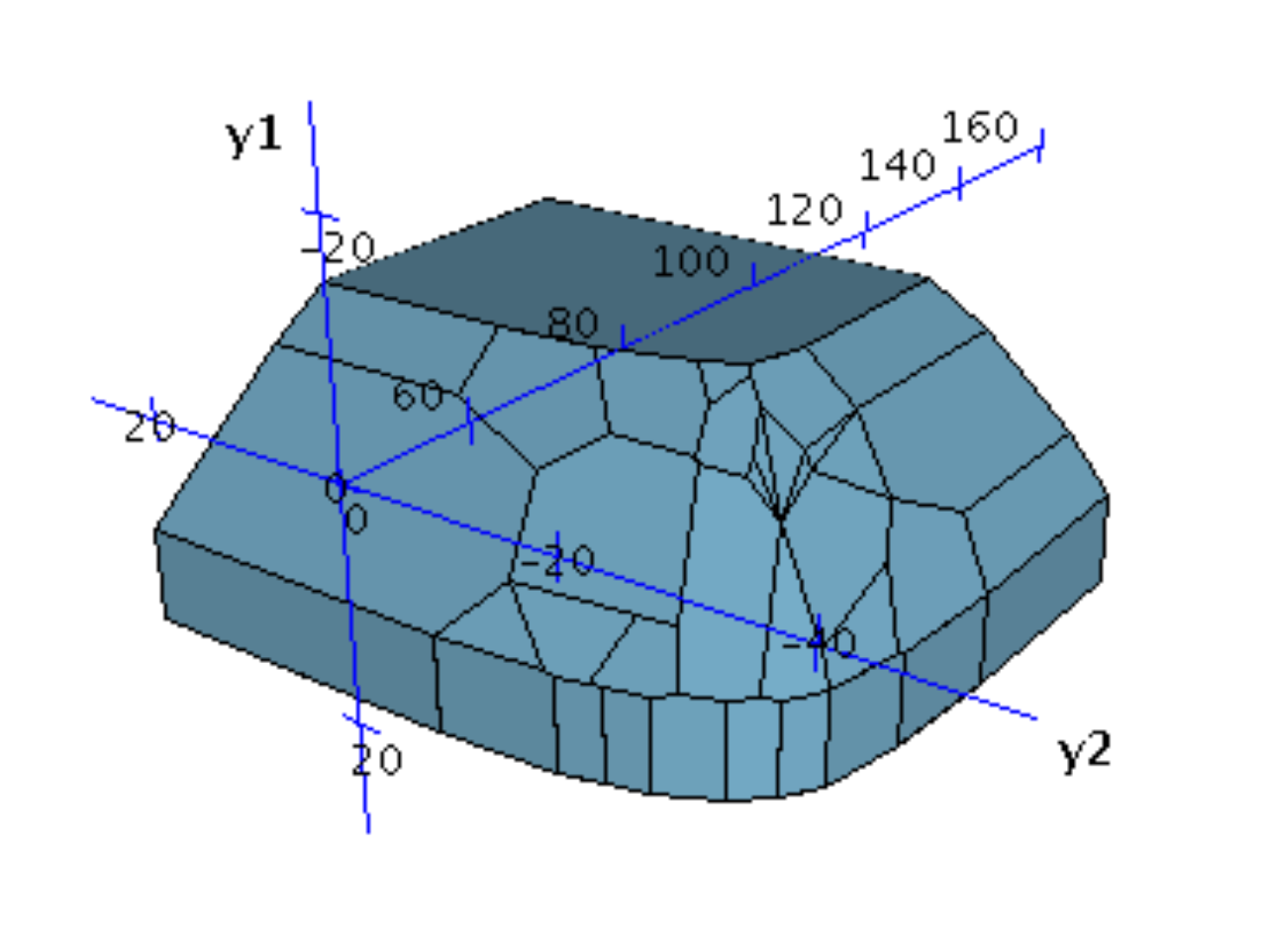}
\includegraphics[scale=0.58]{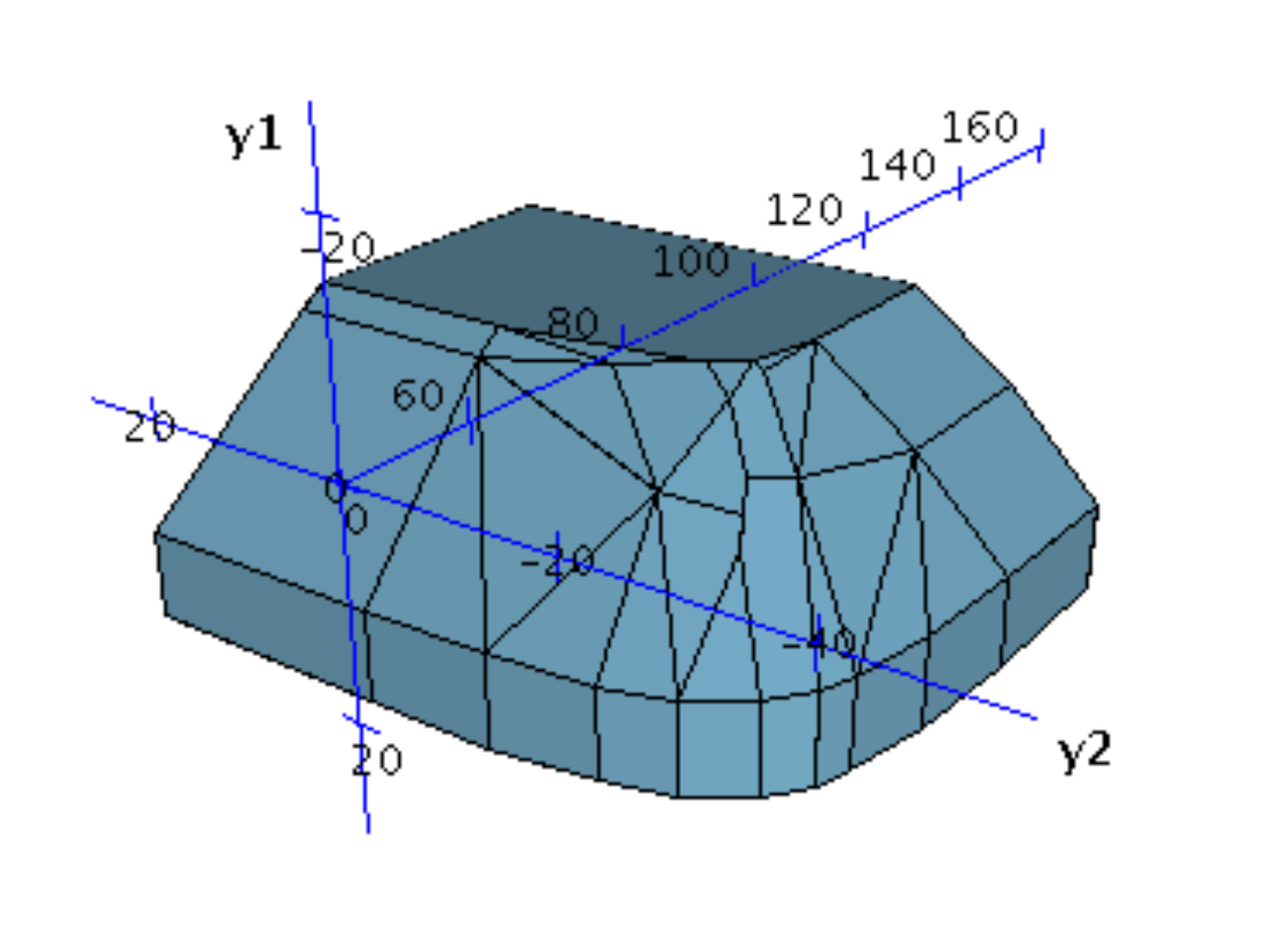}
\includegraphics[scale=0.58]{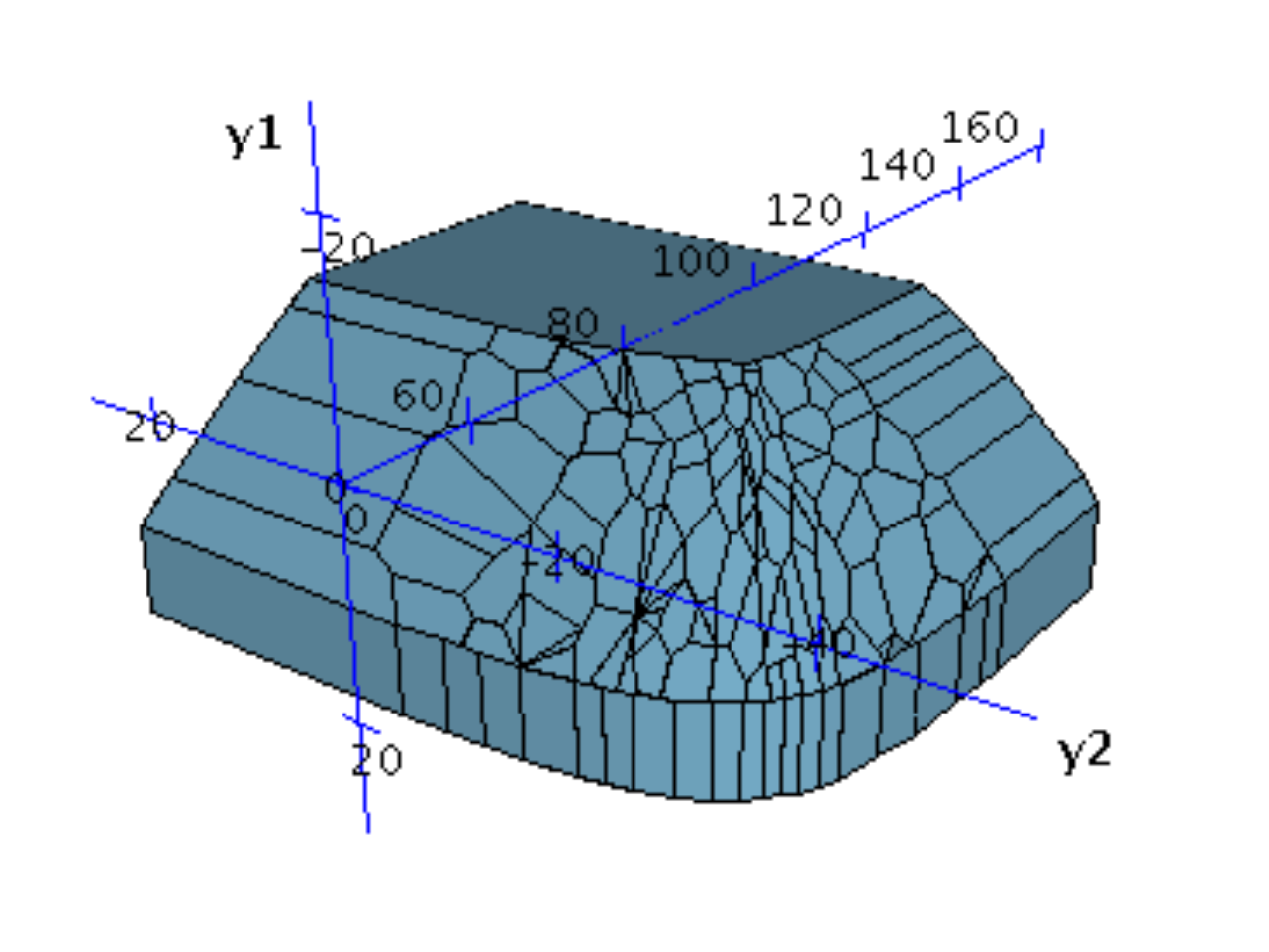}
\end{center}
\caption{Illustration of the upper image $\P$ in Example \ref{ex07}; top left: primal algorithm, variant `break', $\varepsilon=0.3$; top right: primal algorithm, variant `no break', $\varepsilon=0.3$; bottom left: dual algorithm, variant `break', $\varepsilon=0.3$; bottom right: primal algorithm, variant `break', $\varepsilon=0.05$.}
\label{fig1}
\end{figure}

\begin{example} \label{ex07} Shao and Ehrgott \cite{ShaEhr08} used extended variants of Benson's algorithm to solve linear vector optimization problems occurring in radio therapy treatment planning. We compute Example (PL) in \cite{ShaEhr08} which has three objectives and a matrix $B$ of size $1211 \times 1143$ with $153\,930$ nonzeros. The ordering cone is $C=\R^3_+$, and the problem is known to be bounded, which means that the first phase of our algorithms as well as the computation of $\eta$ can be skipped. Further we set $c=(1,1,1)^T$.

The following table shows some results obtained by Algorithm 1. The second column in the table concerns the optional break command in the algorithm. One can observe that more LPs have to be solved when the break command is disabled. On the other hand, less vertex enumerations are required. This explains why the variant `no break' is becoming faster than the `break' variant when $\varepsilon>0$ is small enough.

\begin{center} \begin{tabular}{|c|c|c|c|c|c|c|c|}
\hline $\varepsilon$ &  variant & total time& $|\bar S|$ & $|\bar T|$ & \# LPs & $t_{max}$ & $t_{max}/t_{aver}$ \\
\hline $0.3$         & break    & 47 secs   & 46       & 29        & 75     & 0.84 secs & 1.8                 \\
\hline $0.1$         & break    & 91 secs   & 104      & 61        & 165    & 0.87 secs & 2.0                 \\
\hline $0.05$        & break    & 144 secs  & 176      & 94        & 270    & 0.86 secs & 2.0                 \\
\hline $0.005$       & break    & 1596 secs & 1456     & 597       & 2053   & 0.84 secs & 1.9                 \\
\hline $0.3$         & no break & 54 secs   & 54       & 34        & 88     & 0.85 secs & 1.8                 \\
\hline $0.1$         & no break & 114 secs  & 134      & 78        & 212    & 0.84 secs & 1.9                 \\
\hline $0.05$        & no break & 205 secs  & 264      & 129       & 393    & 0.85 secs & 1.9                 \\
\hline $0.005$       & no break & 1411 secs & 1945     & 804       & 2749   & 0.84 secs & 1.9                 \\
\hline \end{tabular}
\end{center}

Although we need less computational time than in \cite{ShaEhr08}, it is difficult to compare the results as we use a faster computer, a different (open source) LP solver, and we utilize warm starts. Moreover, in \cite{ShaEhr08} an online vertex enumeration method is used, which is preferable if the number of vertices and facets of $\P$ is large. Furthermore, our method yields the same approximation error as in \cite{ShaEhr08} by less vertices and facets of $\P$. See Figure \ref{fig1} for an illustration of part of the results.
\end{example}

\begin{figure}[ht]
\begin{center}
\includegraphics[scale=0.53]{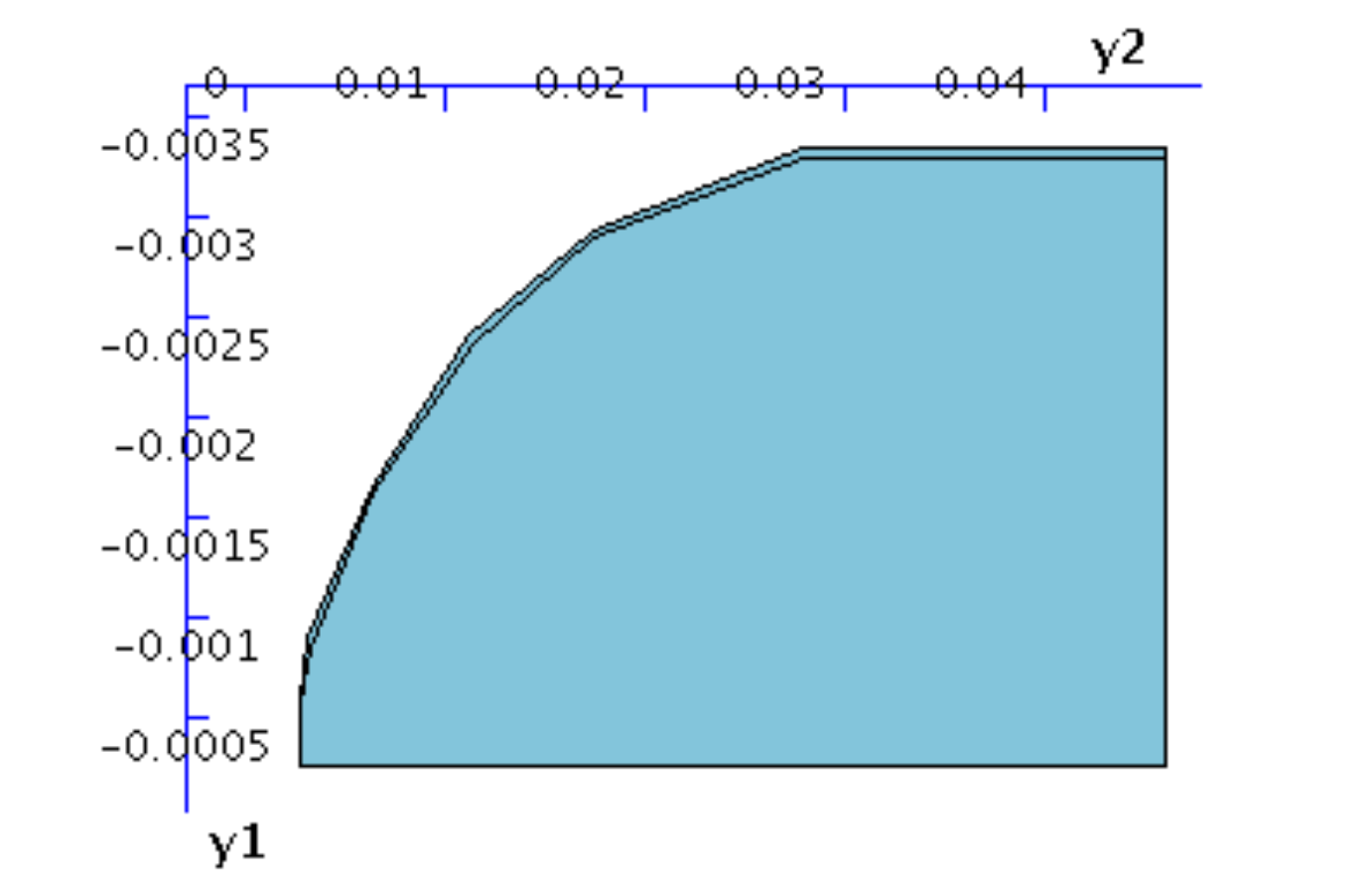}
\includegraphics[scale=0.53]{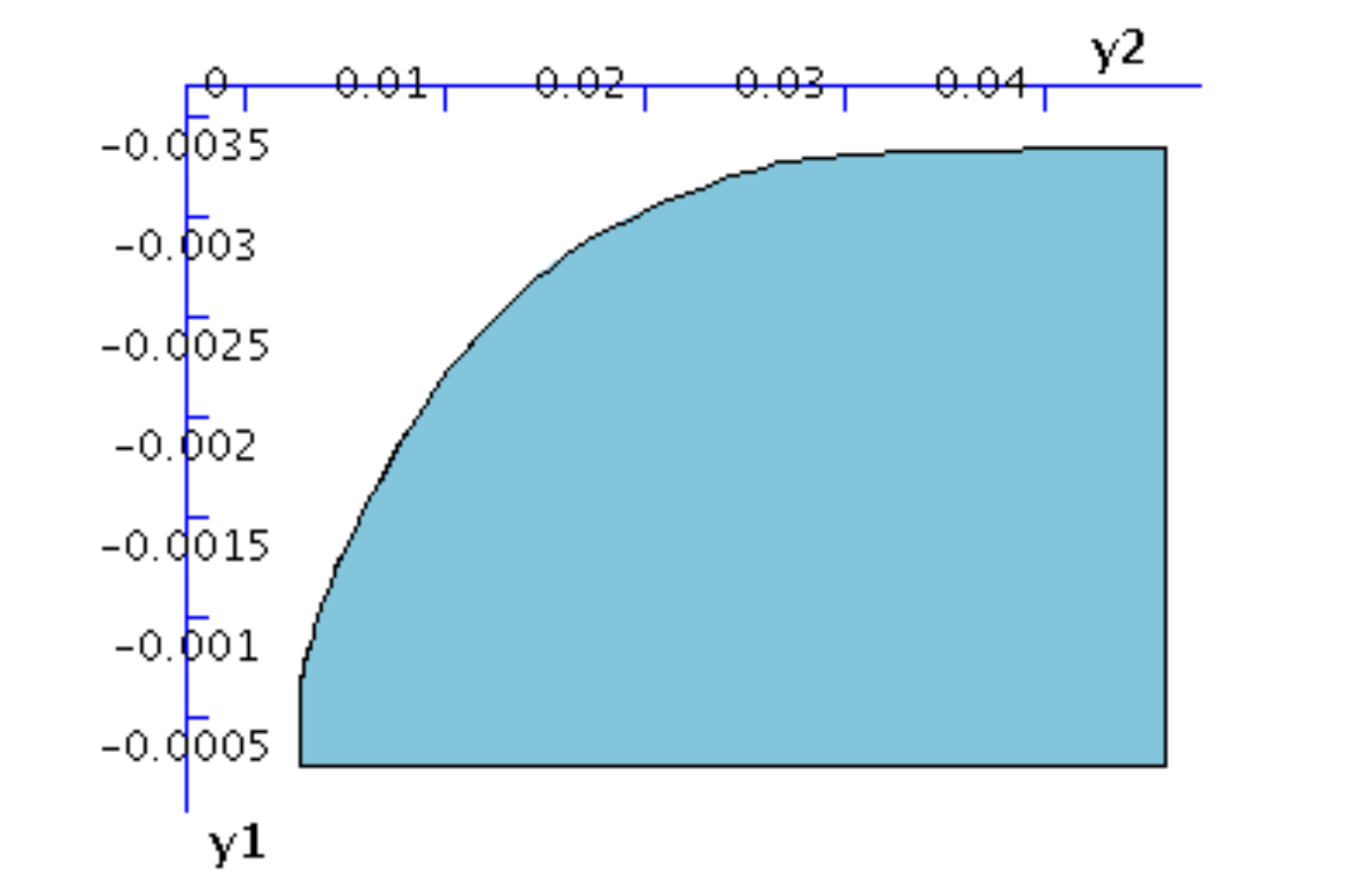}
\end{center}
\caption{Illustration of the upper image $\P$ in Example \ref{ex06} for $\varepsilon/\norm{c}=10^{-4}$ (inner and outer approximation, left) and for $\varepsilon/\norm{c}=10^{-6}$ (right).}
\label{fig_2}
\end{figure}

\begin{example}\label{ex06} Ruszczy\'nski and Vanderbei \cite{RusVan03} developed a specialized parametric method for computing all minimizers of bi-criteria problems.
Using intermediate results of the parametric simplex method, they solved in \cite{RusVan03}, for instance, a mean-risk model with a dense matrix $B$ of size $6161 \times 3799$ having $4\,435\,919$ nonzero entries. They pointed out that computing all the 5017 minimizers takes only a little more time than solving one single LP. As this problem is known to be bounded, we can skip in our algorithms the first phase as well as the computation of $\eta$. For $c=(1,1)^T$, our primal algorithm yields approximate solutions as shown in the following table.\noindent
\begin{center}
 \begin{tabular}{|c|c|c|c|c|c|c|}
\hline $\varepsilon$ & total time &$|\bar S|$&$|\bar T|$& \# LPs & $t_{max}$ & $t_{max}/t_{aver}$ \\
\hline $\sqrt{2}\cdot 10^{-4}$     &   946 secs & 6        & 7        & 13     & 347 secs & 4.4              \\
\hline $\sqrt{2}\cdot 10^{-5}$     &  1648 secs & 22       & 23       & 45     & 304 secs & 8.9              \\
\hline $\sqrt{2}\cdot 10^{-6}$     &  3085 secs & 62       & 63       & 125    & 310 secs & 14.1              \\
\hline \end{tabular}
\end{center}
We see that a `good' approximation with $\varepsilon/\norm{c}=10^{-6}$ can be obtained in about ten times the time required to solve a single LP. This means our general method needs much more time for an $\varepsilon$-solution (compare Remark \ref{rem_eps}) than the parametric method for bounded bi-criteria problems in \cite{RusVan03} needs for the exact solution. On the one hand, approximating solutions are often sufficient for a decision maker in practice, compare Figure \ref{fig_2}. On the other hand, we think that the ideas of the algorithm by Ruszczy\'nski and Vanderbei are promising for further improvements of Benson type algorithms for arbitrary linear vector optimization problems.
\end{example} 

The following three numerical examples refer to Example \ref{exAVARtheory} in the previous section.

\begin{figure}[ht]
\begin{center}
\includegraphics[scale=0.55]{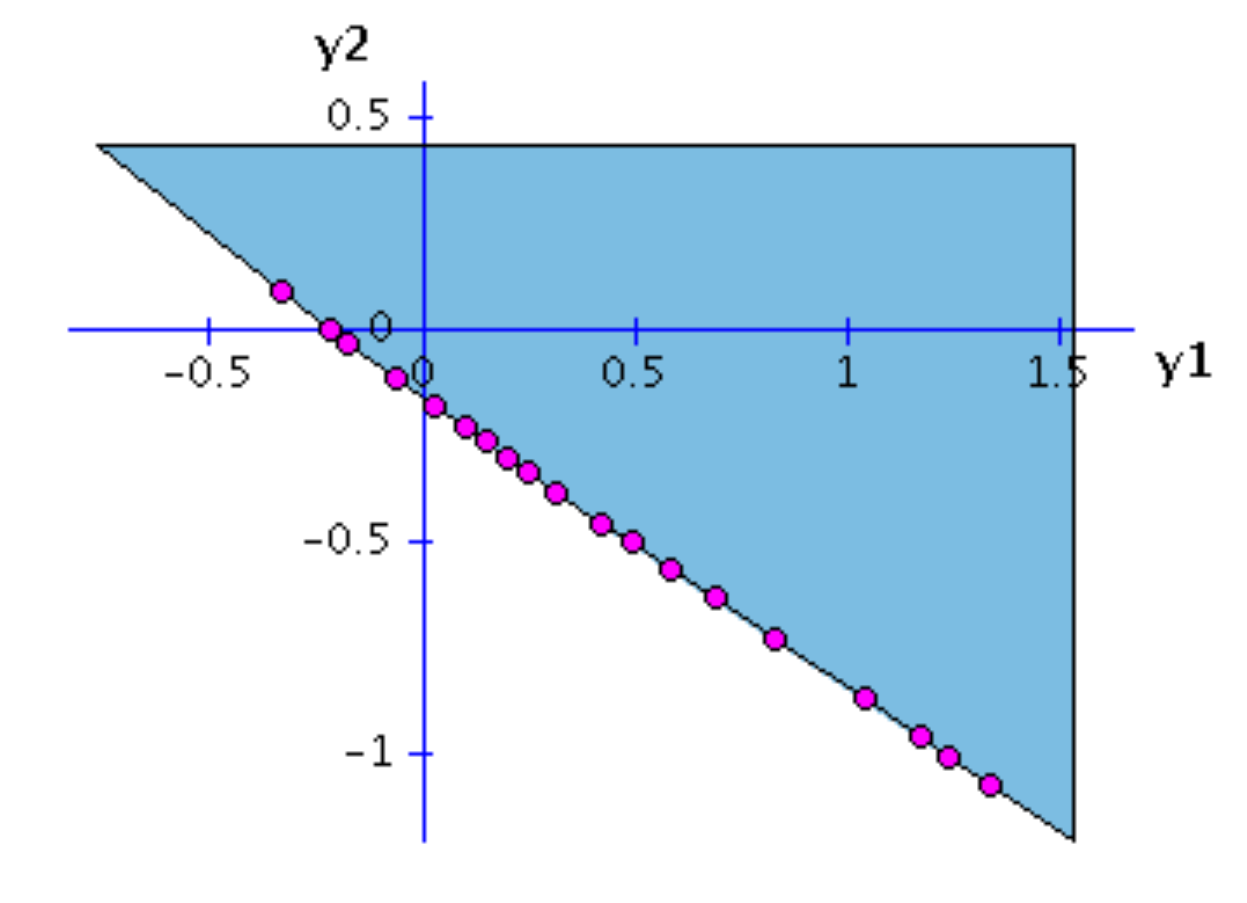}
\includegraphics[scale=0.55]{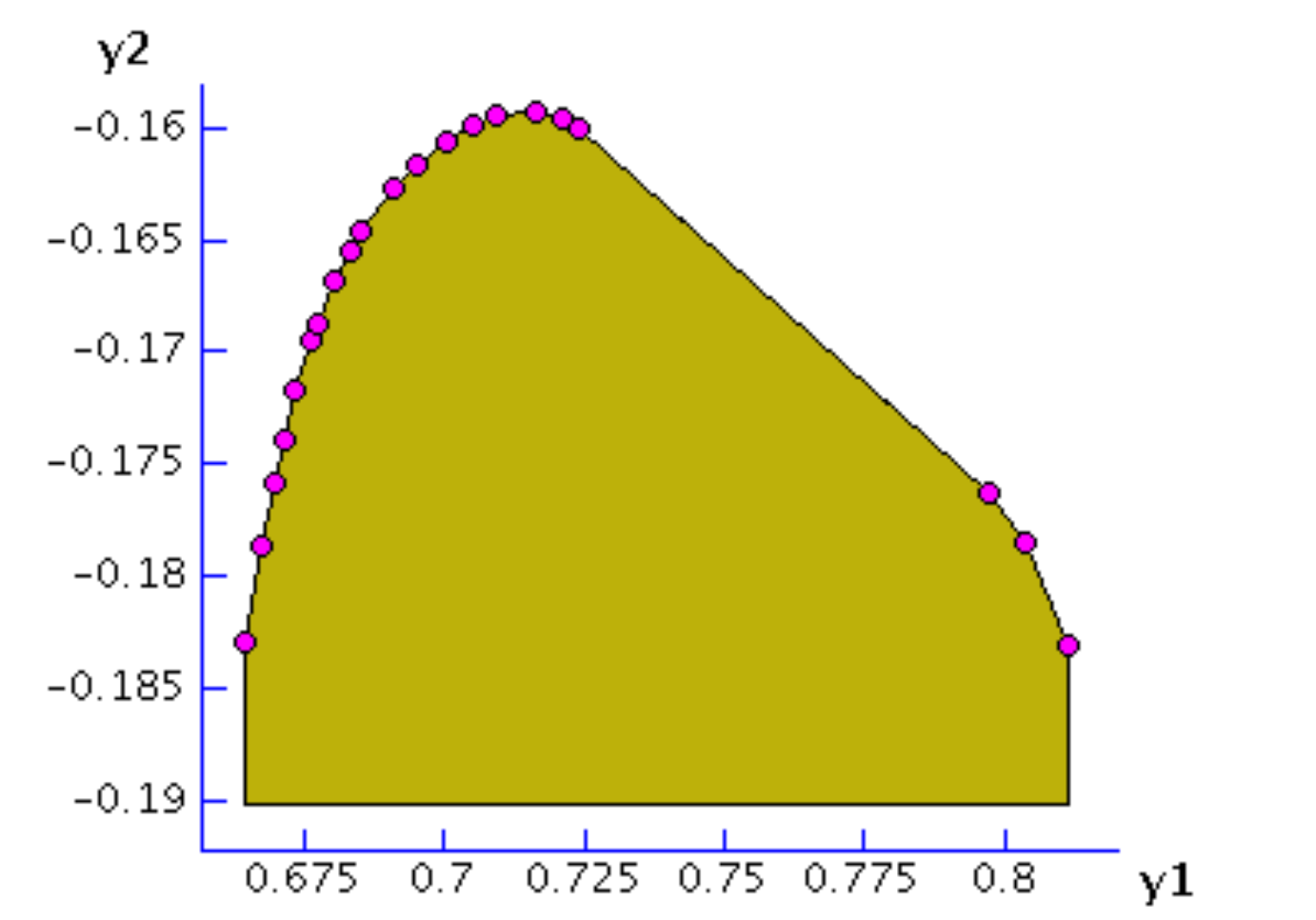}
\end{center}
\caption{Illustration of the upper image $\P$ (left) and the lower image $\D^*$ (right) in Example \ref{AVAR1} for $\varepsilon=10^{-4}$.}
\label{fig_3}
\end{figure}

\begin{example}
\label{AVAR1}
Let us consider $d=12$ assets, the first one is a risk-free USD bond with annual interest rate $5\%$. Given is the vector of today's asset prices, the vector of the expected returns and a covariance matrix for the other $11$  correlated risky assets denoted in USD. Then, one can set up a one-period tree for the asset prices $S_T$ with time horizon $T=1$ year as in \cite{KorMue09} to reflect the drift and covariance structure. The resulting number of scenarios is $N=2^{d-1}=2048$.
We consider proportional transaction costs for the bond to be $\lambda_0=3\%$ and for the first risky assets (usually another currency) to be $\lambda_1=7\%$, the second risky asset to be $\lambda_2=5\%$ and all other risky assets to be $1\%$. Then, the bid and ask prices of the assets are $(S_t^a)_i=(1+\lambda_i)(S_t)_i$ and $(S_t^b)_i=(1-\lambda_i)(S_t)_i$ for $i=0,\dots,11$ and $t\in\{0,T\}$.
Furthermore, let us assume an exchange between any two risky assets can not be made directly, only via cash in USD by selling one asset and buying the other. Since the risk-free bond has strictly positive transaction costs $\lambda_0$, the cones $K_0$ and $K_T(\omega_n)$ for $n=1,\dots,N$ have $d(d-1)=132$ generating vectors each. Thus $I=132$ and $J=270\,336$.

We want to evaluate the risk of an outperformance option with physical delivery and maturity $T$. This option gives the right to buy the asset that performed best out of a basket of assets at a given strike price.
Let the strike be $K=(1+\lambda_1)(S_0)_1$. To normalize to today's prices, let a vector $g$ be defined as $(S_0^a)_1=g_i(S_0^a)_i$ for $i\in\{1,\dots,11\}$. The payoff $X$ of the option is $-K$ in the risk free asset, $g_i$ units of asset $i$ for the smallest $i$ satisfying $g_i(S_T^a)^i=\max_{j\in\{1,...,11\}}(g_j(S_T^a)^j)\geq K$ and zero in the other assets. If $\max_{j\in\{1,...,11\}}(g_j(S_T^a)^j)< K$ the payoff is the zero vector.

Let us calculate $AV@R_\alpha(X)$ as described in Example~\ref{exAVARtheory} with significance levels \[\alpha=(0.1, 0.08, 0.09, 0.1, 0.05, 0.05, 0.04, 0.05, 0.03, 0.04, 0.03, 0.04 )^T.\] As the space of eligible assets we choose the space spanned by the first and the second asset, i.e. $M = \R^2 \times \{0\}^{10}$.
Formula \eqref{AVaRMar} leads to a linear vector optimization problem with 2 objectives and constraints of the form \eqref{eq_constr} where the matrix $B$ is of size $24\,586 \times 295\,056$. $B$ is sparse having $1\,150\,986$ nonzero entries. The ordering cone is $K_0^M$, which is strictly larger than $\R^2_+$ and is generated by $2$ vectors. The vertices of $AV@R_\alpha(X)$ are minimal deposits in the bond and the second asset that compensate for the risk of $X$ measured by $AV@R_\alpha$. The following table shows some computational data of the primal algorithm.
\noindent
\begin{center} \begin{tabular}{|c|c|c|c|c|c|c|c|}
\hline $\varepsilon$ & total time &$|\bar S|$&$|\bar S^h|$&$|\bar T|$& \# LPs & $t_{max}$ & $t_{max}/t_{aver}$ \\
\hline $10^{-4}$     &  3529 secs & 20       & 1          & 21       & 46     & 592 secs & 8.4                 \\
\hline $10^{-5}$     &  4716 secs & 47       & 1          & 48       & 100    & 671 secs & 17.1                \\
\hline $10^{-6}$     &  7905 secs & 122      & 1          & 123      & 253    & 449 secs & 22.0                \\
\hline \end{tabular}
\end{center}
We can see that the problem is unbounded. In Figure \ref{fig_3} the upper image $\P$ and the lower image $\D^*$ for $\varepsilon=10^{-4}$ are shown. We observe that the vertices of $\P$ are almost on a line and the lower image $\D^*$ is more suitable to illustrate the example. 
\end{example}

\begin{figure}[hpt]
\begin{center}
\includegraphics[scale=0.62]{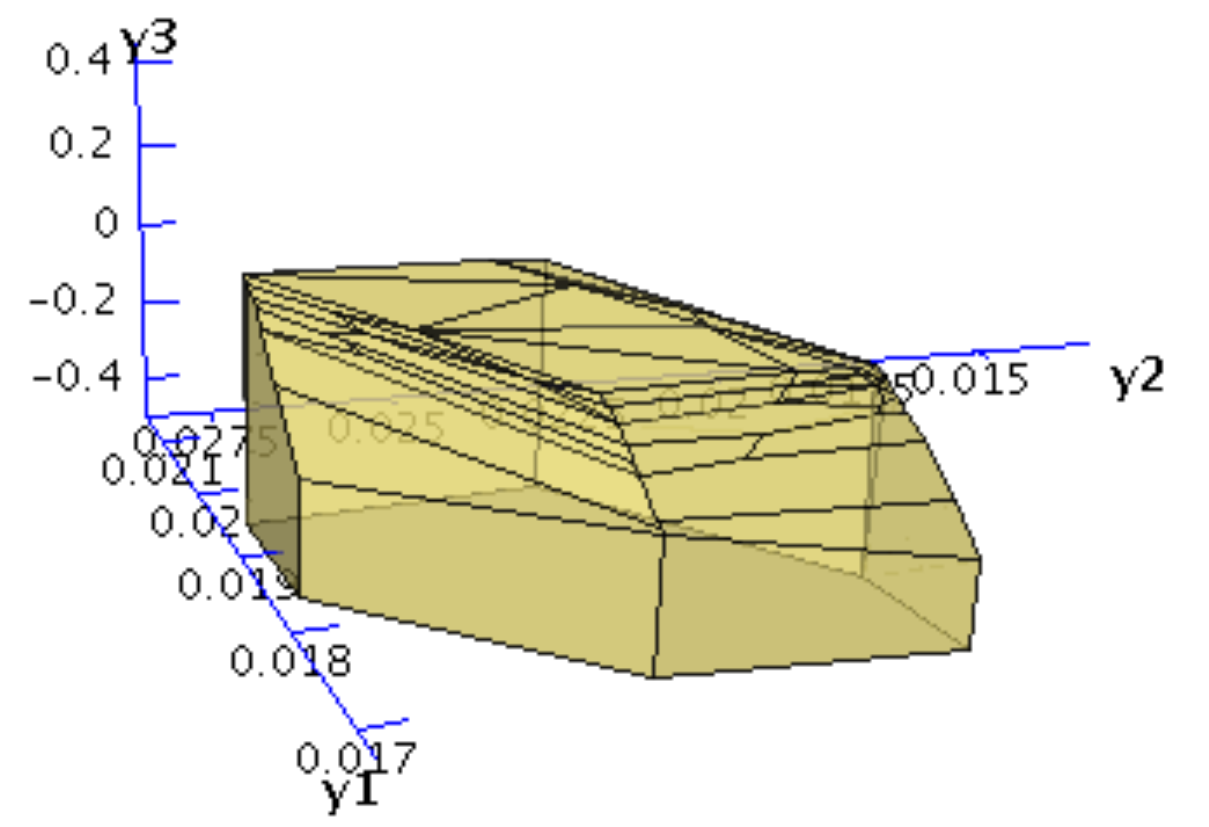}
\includegraphics[scale=0.62]{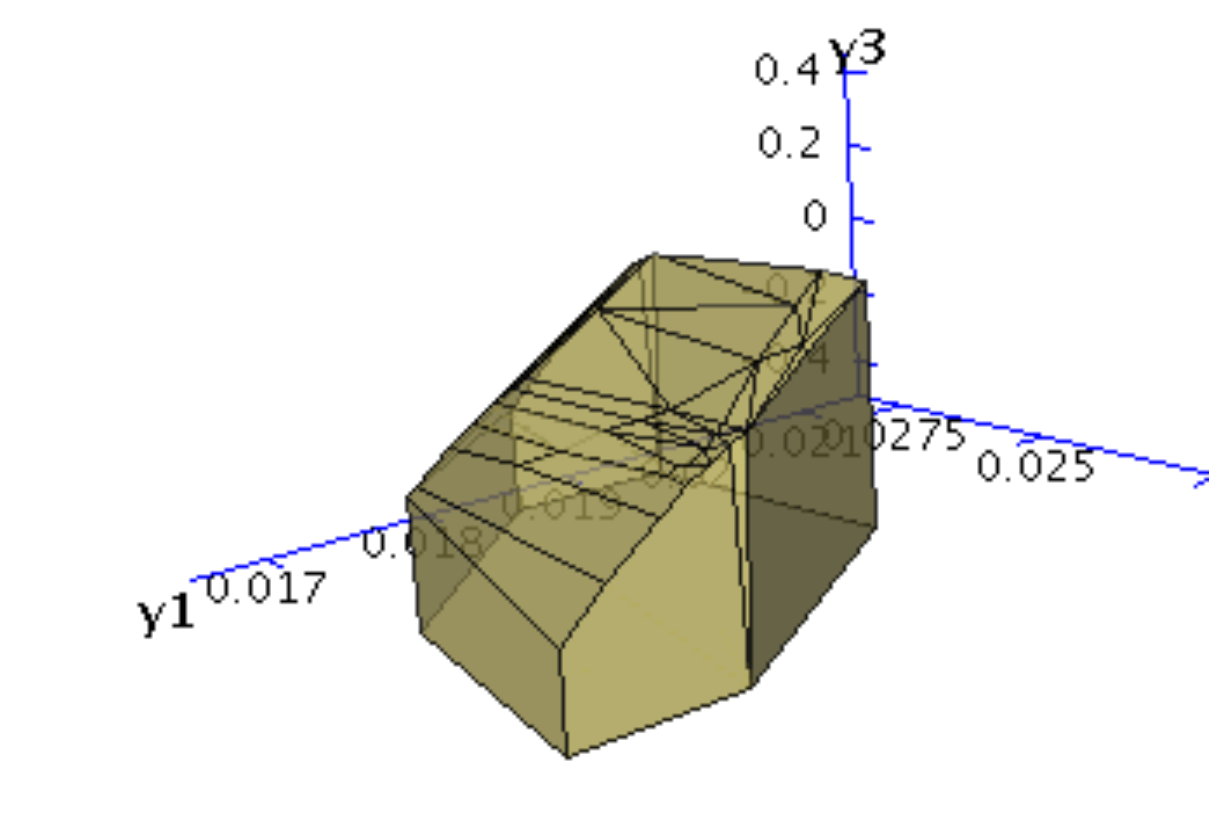}
\end{center}
\caption{The lower image $\D^*$ in Example \ref{ex39} for $\varepsilon=10^{-3}$, two different view points.}
\label{fig_4}
\end{figure}

\begin{example}
\label{AVAR2}\label{ex39}
Now consider $d=11$ assets with a given correlation structure and all other input parameters as for the first $11$ assets in Example~\ref{AVAR1} above. We have $N=2^{d-1}=1024$ and the number of generating vectors of each cone $K_0$ and $K_T(\omega_n)$ for $n=1,\dots,N$ is $d(d-1)=110$.
Consider a basket call option with physical delivery and strike price $K=\sum_{i=1}^{10} (S_0)_i$. If at maturity ($T=1$ year) the value of the basket of risky assets is greater or equal to the strike, i.e., $\sum_{i=1}^{10} (S_T)_i\geq K$, then one would exercise the option and buy the $10$ risky assets at strike $K$ by delivering $K$ times the bond, i.e. $X=(-K,1,\dots,1)^T$ in this case. If the value is less, the payoff vector is the zero vector. As the space of eligible assets we choose the space spanned by the first three assets, i.e. $M = \R^3 \times \{0\}^{8}$. The ordering cone is $K_0^M$, which is strictly larger than $\R^3_+$ and generated by $6$ vectors. 
The linear vector optimization problem has 3 objectives and a matrix of size $11\,272\times124\,025$, which is sparse having $481\,288$ nonzero entries. For $\varepsilon=10^{-3}$, the computational time of the primal algorithm was 1748 seconds. The result is illustrated in Figure \ref{fig_4}. As the upper image $\P$ is difficult to visualize (a polyhedron containing no lines but being `close' to a halfspace) we only provide the lower image $\D^*$.
\end{example}

\begin{figure}[h]
\begin{center}
\includegraphics[scale=0.38]{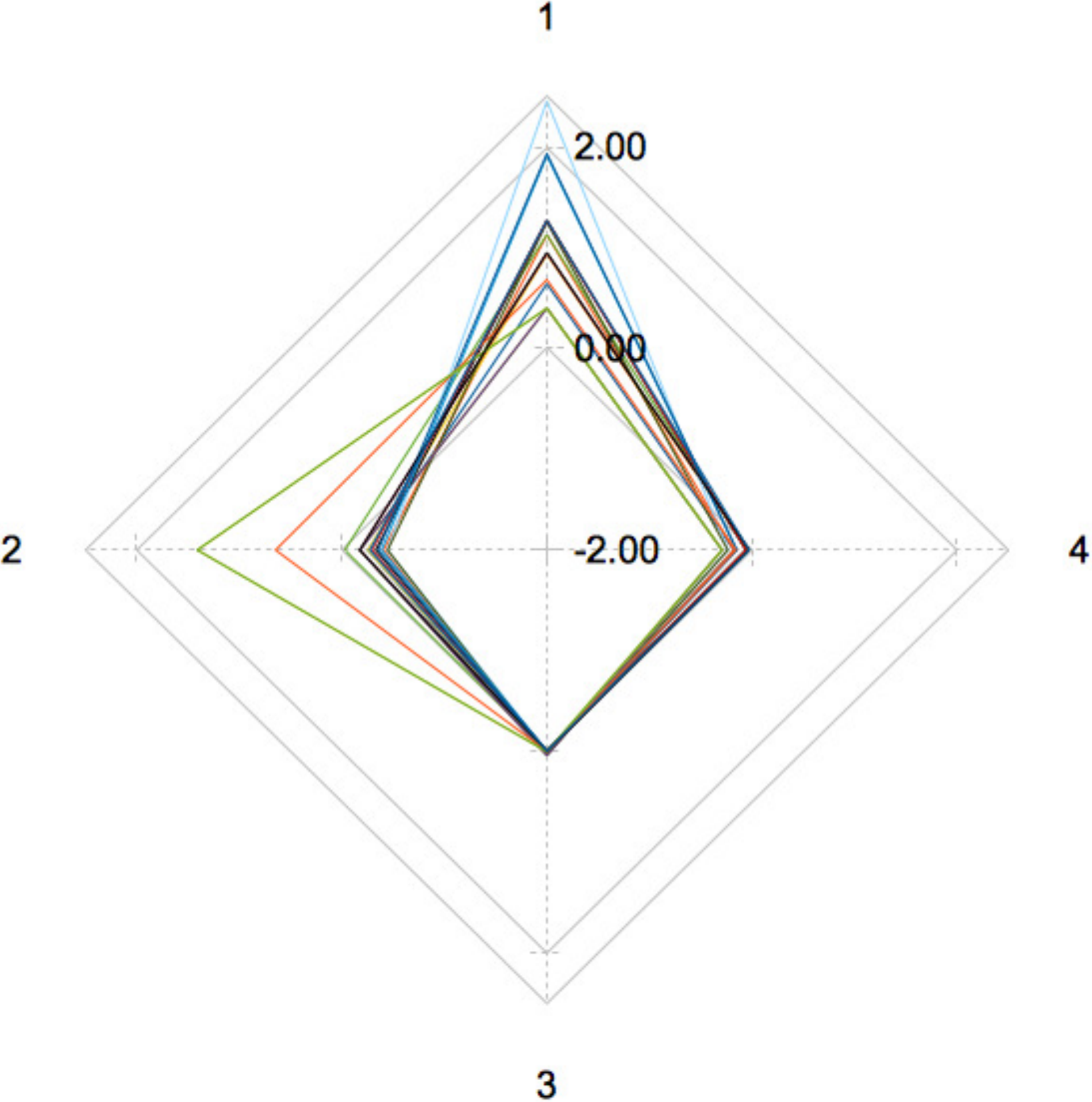}
\end{center}
\caption{Visualization of the 18 vertices of the upper image $\P\subset \R^4$ in Example \ref{ex36} by a radar chart.}
\label{fig_5}
\end{figure}

\begin{example}\label{ex36}
Consider $d=10$ assets with a given correlation structure and all other input parameters as in Example~\ref{AVAR2} above. Let $X$ be the payoff of an outperformance option with physical delivery as described in Example~\ref{AVAR1}. As the space of eligible assets we choose the space spanned by the first four assets, i.e. $M = \R^4 \times \{0\}^{6}$. The corresponding linear vector optimization problem has $4$ objectives and a matrix of size $5\,126 \times 51\,300$ with $197\,638$ nonzero entries. The ordering cone is $K_0^M$, which is strictly larger than $\R^4_+$ and generated by $12$ vectors. Then, $AV@R_\alpha(X)$ with $\alpha$ as in Example~\ref{AVAR2}, obtained as the upper image of linear vector optimization problem computed with the primal algorithm and $\varepsilon=10^{-2}$, has 18 vertices and 12 extreme directions. The vertices of $\P$ are visualized by a radar chart in Figure \ref{fig_5}.
\end{example}

The following numerical example refers to Example~\ref{ex_WCtheory} in the previous section.

\begin{example}\label{ex45}
Let us consider $d=9$ assets with a given correlation structure, all other input parameters as in Example~\ref{AVAR2} above (i.e. $m=3$), and the same basket option (basket call) with payoff $X$ as in Example~\ref{AVAR2}. We want to calculate $RWC(-X)$, the relaxed worst case risk measure at $-X$, as described in Example~\ref{ex_WCtheory} with parameter $\epsilon_i=500$ for $i\in\{3,\dots,9\}$ and zero otherwise. The cone $G$ can be seen as a worst case solvency cone and is chosen to be a conservative modification of $K_0$, where $\lambda$ is replaced by the larger transaction costs $\lambda_{wc}=\lambda+0.2$.

\begin{figure}[h]
\begin{center}
\includegraphics[scale=0.52]{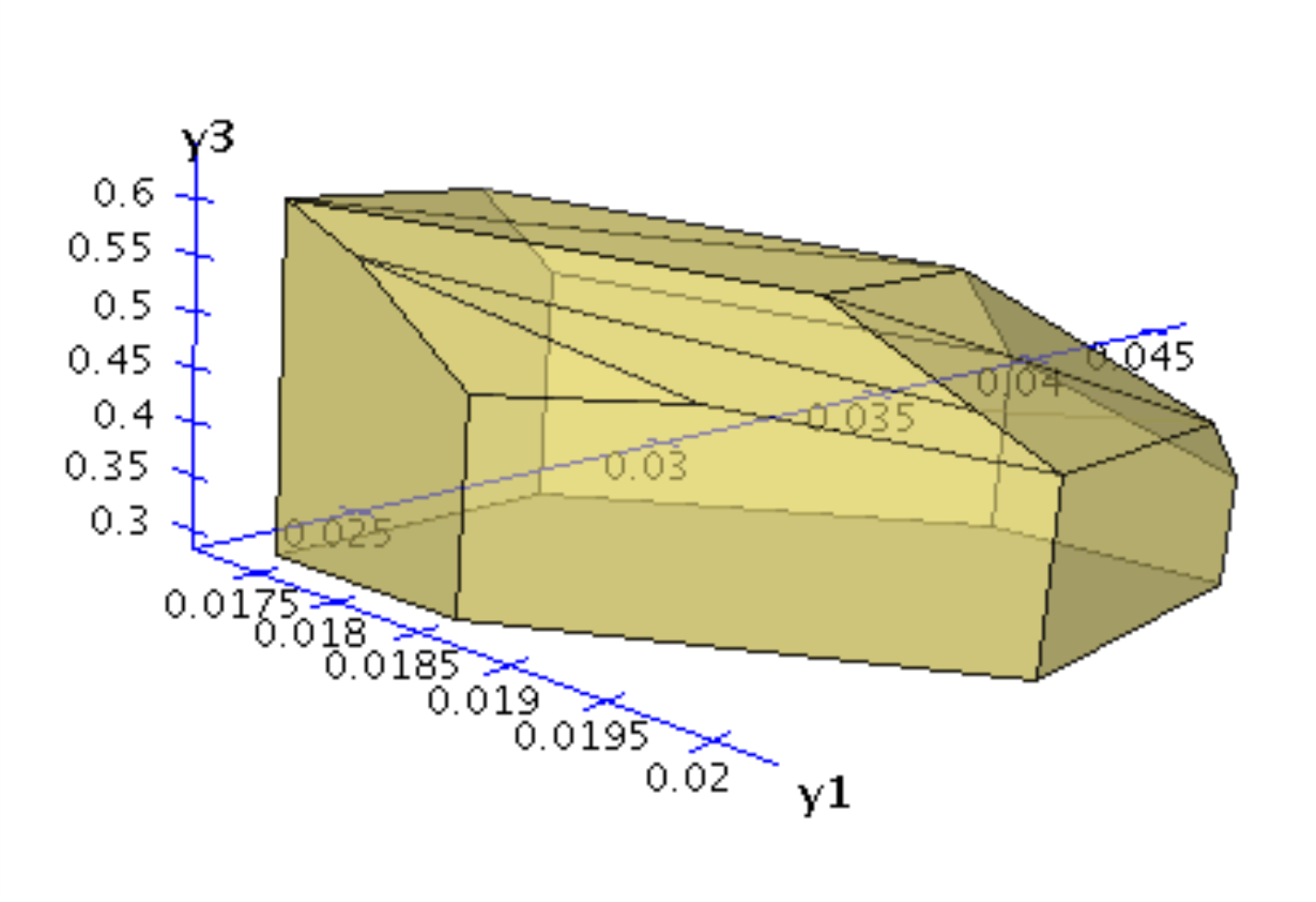}  
\includegraphics[scale=0.52]{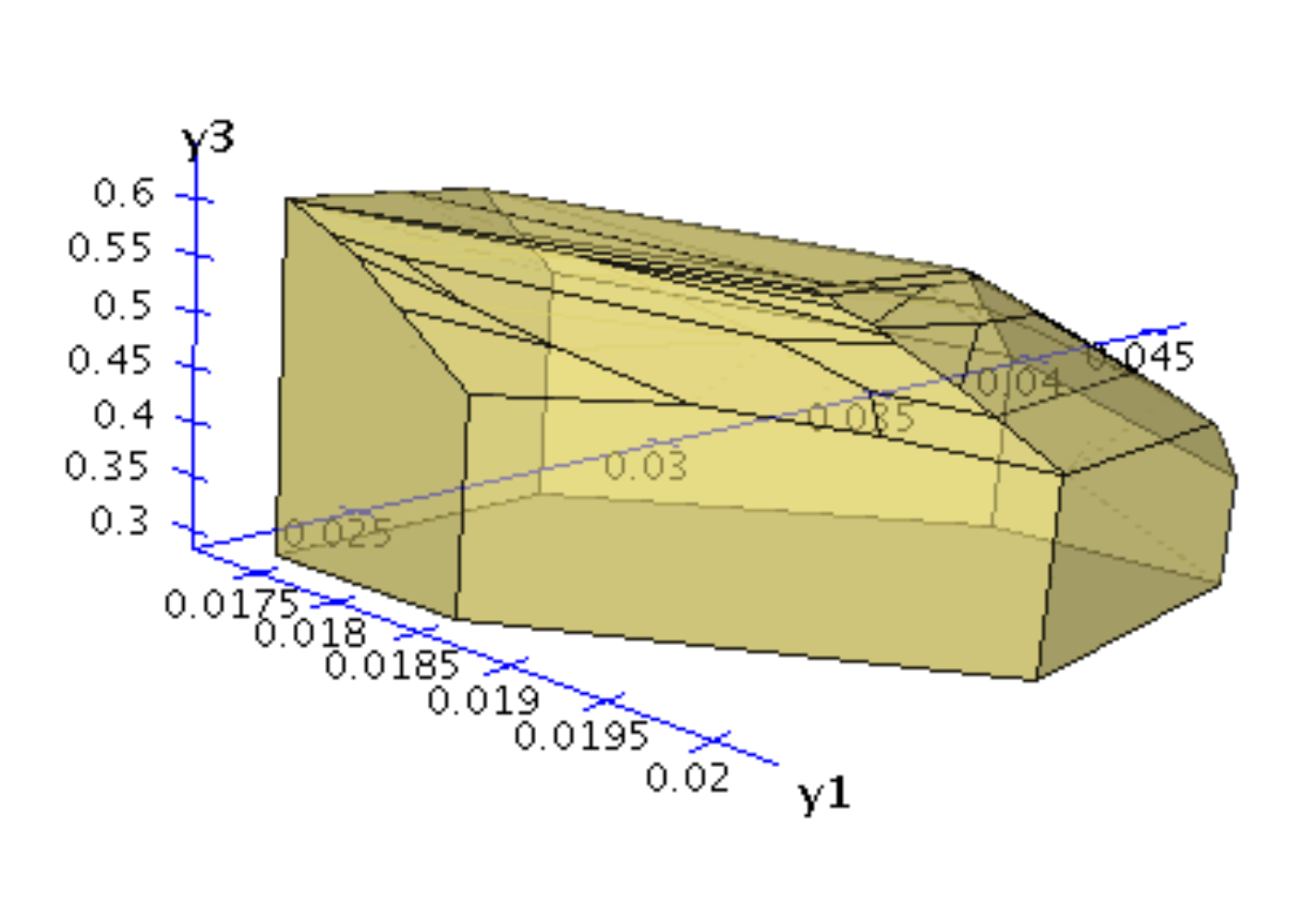}  
\end{center}
\caption{The lower image $\D^*$ in Example \ref{ex45} for $\varepsilon=10^{-2}$ (left) and $\varepsilon=10^{-3}$ (right) computed with the dual algorithm.}
\label{fig_6}
\end{figure}

$RWC(-X)$ corresponds to an upper good deal bound as it is a relaxed version of the set of superhedging portfolios. By considering certain small risks controlled by $\epsilon$ and $G$ as acceptable, the scalar superhedging price of $34.942464$ units of bond is reduced to $34.830995$ units of bond for $RWC(-X)$. The linear vector optimization problem to calculate $RWC(-X)$ has $3$ objectives and a matrix $B$ of size $4\,608 \times 36\,939$, which is sparse with $185\,856$ nonzero entries. 
The above prices in units of bond were obtained by solving linear vector optimization problems with both the primal and dual algorithm for $\varepsilon=10^{-4}$.

The following table shows some computational data and a comparison of the primal and dual algorithm. In Figure \ref{fig_6}, parts of the results are visualized.
\noindent
\begin{center} \begin{tabular}{|c|c|c|c|c|c|c|c|}
\hline $\varepsilon$ &  variant & total time& $|\bar S|$ & $|\bar T|$ & \# LPs & $t_{max}$ & $t_{max}/t_{aver}$ \\
\hline $10^{-2}$     & primal   & 113 secs  & 13       & 12        & 37     & 8.8 secs  & 3.8   \\ 
\hline $10^{-3}$     & primal   & 239 secs  & 68       & 37        & 123    & 9.0 secs  & 6.8   \\ 
\hline $10^{-4}$     & primal   & 506 secs  & 153      & 82        & 308    & 8.8 secs  & 8.6   \\ 
\hline $10^{-2}$     & dual     &  86 secs  & 7        & 20        & 37     & 5.8 secs  & 3.8   \\ 
\hline $10^{-3}$     & dual     & 193 secs  & 30       & 73        & 113    & 8.2 secs  & 4.8   \\ 
\hline $10^{-4}$     & dual     & 404 secs  & 74       & 136       & 256    & 5.8 secs  & 5.1   \\ 
\hline \end{tabular}
\end{center}
\end{example}

\noindent
\textbf{Acknowledgements.} We thank Dr Lizhen Shao for providing the data of Example \ref{ex07} taken from \cite{ShaEhr08} and we thank Professor Robert Vanderbei for supplying the data of Example \ref{ex06} taken from \cite{RusVan03}.

\bibliographystyle{abbrv}

\end{document}